\documentclass{siamltex}
\usepackage{lineno,hyperref}
\usepackage{multirow}
\usepackage[english]{babel}
\usepackage{amsmath}
\usepackage{bm}
\usepackage[matha,mathx]{mathabx}
\usepackage{mathrsfs}
\usepackage{amssymb}
\usepackage{algpseudocode} 
\usepackage{algorithm}
\usepackage{graphicx}
\usepackage{amsmath}
\usepackage{color} 
\usepackage{caption}
\usepackage{enumitem} 

\makeatletter
\newcommand*{\rom}[1]{\expandafter\@slowromancap\romannumeral #1@}
\makeatother

\usepackage[table]{xcolor}
\definecolor{lightgray}{gray}{0.9}
\usepackage{arydshln}

\newtheorem{remark}{Remark}

\usepackage{here}
\usepackage{graphicx} 

\DeclareMathAlphabet{\mathpzc}{OT1}{pzc}{m}{it}
\parindent 0.3cm
\textwidth 15cm
\textheight 22cm
\title{Higher order multi-dimension reduction methods via the Einstein product}

\author{ A. Zahir \thanks{The UM6P Vanguard Center, Mohammed VI Polytechnic University, Green 
City, Morocco.}
\and K. Jbilou\footnotemark[1] \thanks{Université du Littoral Cote d'Opale, LMPA, 50 rue F. Buisson, 62228 Calais-Cedex, France.}
\and A. Ratnani\footnotemark[1]}

\textwidth 14.5cm 
\textheight 20cm

\begin{document}
\maketitle

\begin{abstract}
This paper explores the extension of dimension reduction (DR) techniques to the multi-dimension case by using the Einstein product. Our focus lies on graph-based methods, encompassing both linear and nonlinear approaches, within both supervised and unsupervised learning paradigms. Additionally, we investigate variants such as repulsion graphs and kernel methods for linear approaches. Furthermore, we present two generalizations for each method, based on single or multiple weights. We demonstrate the straightforward nature of these generalizations and provide theoretical insights. Numerical experiments are conducted, and results are compared with original methods, highlighting the efficiency of our proposed methods, particularly in handling high-dimensional data such as color images.
\end{abstract}

\begin{keywords}
Tensor, Dimension reduction, Einstein product, Graph-based methods, Multi-dimensional Data, Trace optimization problem.
\end{keywords}

\section{Introduction}
In today's data-driven world, where amounts of information are collected and analyzed, the ability to simplify and interpret data has never been more critical. The task is particularly evident in the field of data science and machine learning \cite{webb2011statistical}, where the curse of dimensionality is a major obstacle. \emph{Dimension reduction} techniques aim to address this issue by projecting high-dimensional data onto a lower-dimensional space while preserving the underlying structure of the data. These methods have been proven to be quite efficient in revealing hidden structures and patterns.

The landscape of DR is quite rich, with a wide range of methods, from linear to non-linear \cite{lee2007nonlinear}, supervised to unsupervised, and versions of these methods that incorporate \emph{repulsion-based principles, or kernels}.
We find as an example, Principal Component Analysis (PCA)\cite{PCA}, Locality Preserving Projections (LPP)\cite{he2003locality}, Orthogonal Neighborhood Preserving Projections (ONPP)\cite{kokiopoulou2005orthogonal,kokiopoulou2007orthogonal}, Neighborhood Preserving Projections (NPP)\cite{kokiopoulou2005orthogonal}, Laplacian Eigenmap (LE)\cite{belkin2003laplacian}, Locally Linear Embedding (LLE)\cite{roweis2000nonlinear}... Each of these techniques has been subject to extensive research and applications, offering insights into data structures that are often hidden in high-dimensional spaces, these methods can also be seen as an \emph{optimization problem of trace}, with some constraints\cite{saul2006spectral}.

Current approaches often require transforming the \emph{multi-dimensional data}, such as images \cite{he2005face,belhumeur1997eigenfaces,turk1991face,liu2002face,yang2001face}, into a matrix, into flattened (vectorized) forms before analysis. This process, while it's fast, however, can be problematic, as it may lead to loss of inherent structure and relational information within the data.

This paper proposes a novel approach to generalize dimensional reduction techniques, employing the \emph{Einstein product}, a tool in \emph{tensor} algebra, which is the natural extension of the usual matrix product. By reformulating the operations of both linear and non-linear methods in the context of tensor operations, the generalization maintains the multi-dimensional integrity of complex datasets. This approach circumvents the need for vectorization, preserving the rich intrinsic structure of the data.\\
Our contribution lies in not only proposing a generalized framework for dimensional reduction, but also in demonstrating its effectiveness through empirical studies. We show that the proposed methods, outperform or at least are the same as their matrix-based counterparts, while preserving the integrity of the data.

This paper is organized as follows. Firstly, we will talk in Section \ref{sec:GB-2D} about the methods in the matrix case, then, in Section \ref{sec:Prop}, we will introduce the mathematical background of tensors, and the Einstein product. Next in Section \ref{sec:DR:3D}, we will introduce the different methods, and the generalization of these methods using the Einstein product. Following that, Section \ref{sec:Variants} is dedicated to presenting variants of these techniques. Subsequently, we will present the numerical experiments and the results in Section \ref{sec:Exp}. Lastly, we offer some concluding remarks and suggestion of future work in Section \ref{sec:Cnc}.

\section{Dimension reduction methods in matrix case}\label{sec:GB-2D}
Given a set of $n$ data points $\mathbf{x}_1,\ldots,\mathbf{x}_n \in \mathbb{R}^m$ and a set of $n$ corresponding points $\mathbf{y}_1,\ldots,\mathbf{y}_n \in \mathbb{R}^d$, denote the data matrix $X=\left[\mathbf{x}_{1}, \cdots, \mathbf{x}_{n}\right] \in \mathbb{R}^{m \times n} $ and the low-dimensional matrix $ Y= \left[\mathbf{y}_{1}, \cdots, \mathbf{y}_{n}\right] \in \mathbb{R}^{d \times n}$. The objective is to find a mapping $\Phi: \mathbb{R}^{m} \longrightarrow \mathbb{R}^{d} , \phi\left(\mathbf{x}_i\right)=\mathbf{y}_i, \quad i=1, \cdots, n$. The mapping is either non-linear $Y=\Phi(X)$, or linear $Y=V^\top X$, in the latter case, it reduces to find the projection matrix $V \in \mathbb{R}^{m \times d}$.

We denote the similarity matrix of a graph by $W \in \mathbb{R}^{n \times n}$, the degree matrix by $D$, and the Laplacian matrix by $L=D-W$. 
For the sake of simplifications, we will define some new matrices
\begin{equation*}
\begin{aligned}
& 
L_n= D^{-1 / 2} L D^{-1 / 2} \text { , }
\widehat{W}=D^{-1 / 2} W D^{-1 / 2} \text { , }
M=(I_n-W^\top )(I_n-W) \text { , } \\
& 
\widehat{X}=X D^{1 / 2} \text { , } 
\widehat{Y}=Y D^{1 / 2} \text { , } 
H=I_n-\dfrac{1}{n}\mathbf{1}\mathbf{1}^\top,
\end{aligned}
\end{equation*}
where $H$ is the centering matrix, and $\mathbf{1}=\left(1,\ldots, 1 \right)^\top \in \mathbb{R}^n.$\\
The usual loss functions used are defined as follows
\begin{eqnarray} 
\phi_1(Y)&:=&\dfrac{1}{2} \sum_{i, j=1}^{n} W_{i j}\left\|\mathbf{y}_{i}-\mathbf{y}_{j}\right\|_{2}^{2}=\operatorname{Tr}\left[Y L Y^{H}\right], \label{eq : loss1}\\
\phi_2(Y)&:=&\sum_{i}\left\|\mathbf{y}_{i}-\sum_{j} W_{i j} \mathbf{y}_{j}\right\|_{2}^{2}=\operatorname{Tr}\left[YMY^{H}\right],\label{eq : loss2}\\
\Phi_3(Y)&:=&\sum_{i}\left\|\mathbf{y}_{i}-\dfrac{1}{n}\sum_{j} \mathbf{y}_{j}\right\|_{2}^{2}=\operatorname{Tr}\left[Y (I-\dfrac{1}{n}\mathbf{1}\mathbf{1}^\top ) Y^{H} \right]. \label{eq : loss3}
\end{eqnarray}
Equations \eqref{eq : loss1}, and \eqref{eq : loss3} preserve the locality, i.e., the point and its representation stay close, while Equation~\eqref{eq : loss2} preserves the local geometry, i.e., the representation point can be written as a linear combination of its neighbours.

For simplicity, we will refer to the $d$ eigenvectors of a matrix corresponding to the largest and smallest eigenvalues, respectively, as the largest and smallest $d$ eigenvectors of a matrix. The same terminology applies to the left or right singular vectors.
Table \ref{table1} summarizes the various dimension reduction methods, their corresponding optimization problems and the solutions.
\begin{table}[H]
\centering
\begin{tabular}{|>{\centering\arraybackslash}p{40mm}|>{\centering\arraybackslash}p{21mm}|p{20mm}|>{\centering\arraybackslash}p{46mm}|}
\rowcolor{lightgray}
\hline \textbf{Method} & \textbf{Loss \newline function} & \textbf{Constraint} & \textbf{Solution} \\
\hline
\multicolumn{4}{|l|}{\textbf{Linear methods}} \\
\hline Principal component analysis\cite{PCA}. & Maximize Equation~\eqref{eq : loss3}. & $V V^\top=I$ & Largest $d$ left singular vectors of $XH$. \\
\hline
Locality Preserving Projections. \cite{he2003locality} & Minimize Equation~\eqref{eq : loss1} & $Y D Y^\top=I$ & Solution of $\widehat{X}(I_n-\widehat{W}) \widehat{X}^\top u_i= \lambda_i \widehat{X} \widehat{X}^\top u_i$. \\
\hline
Orthogonal Locality Preserving Projections.\cite{kokiopoulou2005orthogonal,kokiopoulou2007orthogonal} & Minimize Equation~\eqref{eq : loss1} & $V V^\top=I$ & Smallest $d$ eigenvectors of $X L X^\top$. \\ 
\hline
Orthogonal Neighborhood Preserving Projections.\cite{kokiopoulou2005orthogonal,kokiopoulou2007orthogonal} & Minimize Equation~\eqref{eq : loss2} & $V V^\top=I$ & Smallest $d$ eigenvectors of $X M X^\top$. \\
\hline
Neighborhood Preserving Projections.\cite{kokiopoulou2005orthogonal} & Minimize Equation~\eqref{eq : loss2} & $Y Y^\top=I$ & Sol of $XMX^\top u_i=\lambda_i X X^\top u_i$ \\
\hline
\multicolumn{4}{|l|}{\textbf{Non-Linear methods}} \\
\hline
Locally Linear Embedding.\cite{roweis2000nonlinear} & Minimize Equation~\eqref{eq : loss2} & $Y Y^\top=I$ & Smallest $d$ Eigenvectors of $M$.\\
\hline
Laplacian Eigenmap.\cite{belkin2003laplacian} & Minimize Equation~\eqref{eq : loss1} & $Y D Y^\top=I$ & Solution of $L u_i = \lambda_i D u_i$. \\ 
\hline
\end{tabular}
\caption{Objective functions and constraints employed in various dimension reduction methods along with corresponding solutions.}\label{table1}
\end{table}
\noindent Notice that the smallest eigenvalue is disregarded in the solutions, thus, the second to the $d+1$ eigenvectors are taken.
The graph based methods are quite similar, each one tries to give an accurate representation of the data while preserving a desired property. The solution of the optimization problem is given by the eigenvectors, or the singular vectors.

Next, we will introduce notations related to the tensor theory (Einstein product) as well as some properties that guarantee the proposed generalization.
\section{The Einstein product and its properties}\label{sec:Prop}
Let $\mathbf{I}=\{I_1,\ldots,I_N\}$ and $\mathbf{J}=\{J_1,\ldots,J_M\}$ be two multi-indices, and $\mathbf{i}=\{i_1,\ldots,i_N\}$ and $\mathbf{j}=\{j_1,\ldots,j_M\}$ be two indices. The index mapping function $\operatorname{ivec}(\mathbf{i}, \mathbf{I})=i_1+\sum_{k=2}^N\left(i_k-1\right) \prod_{l=1}^{k-1} I_l$ that maps the multi-index $\mathbf{i}$ to the corresponding index in the vectorized form of a tensor of size $I_1 \times \ldots \times I_N$. 
The unfolding, also known also as flattening or matricizaion, is a function $\Psi: \mathbb{R}^{I_1 \times I_2 \times \cdots \times I_N \times J_1 \times J_2 \times \cdots \times J_M} \longrightarrow \mathbb{R}^{|\mathbf{I}| \times|\mathbf{J}|}, \; \mathcal{A} \mapsto A$ with $A_{i j}=\mathcal{A}_{i_1 i_2 \ldots i_N j_1 j_2 \ldots j_M}$, that maps a tensor into a matrix, with the subscripts $i=\operatorname{ivec}(\mathbf{i}, \mathbf{I}),$ and $j=\operatorname{ivec}(\mathbf{j}, \mathbf{J})$. The mapping $\Psi$ is a linear isomorphism, and its inverse is denoted by $\Psi^{-1}$. It would generalize some concepts of the matrix theory more easily.

The frontal slice of the $N$-order tensor $\mathcal{A}\in \mathbb{R}^{I_1\times \ldots \times I_N}$, denoted by $\mathcal{A}^{(i)}$ is the tensor $\mathcal{A}_{:,\ldots,:i}$ (the last mode is fixed to $i$). A tensor $\mathcal{A} \in \mathbb{R}^{I_1\times \ldots \times I_N \times J_1\times \ldots \times J_M}$ is called even if $N=M$ and square if $I_i=J_i$ for all $i=1,\ldots,N$ \cite{qi2017tensor}.\\

\begin{definition}[m-mode product]\cite{Kolda2009}
Let $\mathcal{X} \in \mathbb{R}^{I_1 \times \ldots \times I_M}$, and $U \in \mathbb{R}^{J\times I_m}$, the m-mode (matrix) product of $\mathcal{X}$ and $U$ is a tensor of size $I_1 \times \ldots I_{m-1} \times J \times I_{m+1} \ldots \times I_M$, with element-wise
\begin{equation}
(\mathcal{X} \times_m U)_{i_1 \ldots i_{m-1}ji_{m+1} \ldots i_M}=\sum_{i_m=1}^{I_m} U_{j i_m} \mathcal{X}_{i_1 \ldots i_M}.
\end{equation}
\end{definition}
\medskip
\begin{definition}[Einstein product]\cite{Brazell2013}
Let $\mathcal{X} \in \mathbb{R}^{I_1 \times \ldots \times I_M \times K_1 \times \ldots \times K_N}$ and \\ $\mathcal{Y} \in \mathbb{R}^{K_1 \times \ldots \times K_N \times J_1 \times \ldots \times J_M}$. The Einstein product of $\mathcal{X}$ and $\mathcal{Y}$ is the tensor of size $ \mathbb{R}^{I_1 \times \ldots \times I_M \times J_1 \times \ldots \times J_M}$ whose elements are defined by
\begin{equation}
\left(\mathcal{X} *_N \mathcal{Y}\right)_{i_1 \ldots i_M j_1 \ldots j_M}=\sum_{k_1 \ldots k_N} \mathcal{X}_{i_1 \ldots i_M k_1 \ldots k_N} \mathcal{Y}_{k_1 \ldots k_N j_1 \ldots j_M}.
\end{equation}
\end{definition}

\medskip
Next, we have some definitions related to the Einstein product. 
\begin{definition}
\begin{itemize}
\item Let $\mathcal{A}\in \mathbb{R}^{I_1\times \ldots \times I_N\times J_1\times \ldots \times J_M}$, then the transpose tensor \cite{qi2017tensor} of $\mathcal{A}$ denoted by $\mathcal{A}^\top$ is the tensor of size $J_1\times \ldots \times J_M \times I_1\times \ldots \times I_N$ whose entries defined by $(\mathcal{A}^\top )_{j_1\dots j_M i_1\dots i_N}=\mathcal{A}_{i_1\dots i_N j_1\dots j_M}$. 
\item $\mathcal{A}$ is a diagonal tensor if all of its entries are zero except for those on its diagonal, denoted as $(\mathcal{A})_{i_1\dots i_N i_1\dots i_N}$, for all $ 1\leq i_r \leq \min(I_r,J_r), \; 1\leq r \leq N$.
\item The identity tensor denoted by $\mathcal{I}_N\in \mathbb{R}^{I_1\times \ldots \times I_N \times I_1 \times \ldots \times I_N}$ is a diagonal tensor with only ones on its diagonal. 
\item A square tensor $\mathcal{A} \in \mathbb{R}^{I_1\times \ldots \times I_N \times I_1 \times \ldots \times I_N}$ is called symmetric if $\mathcal{A}^\top = \mathcal{A}$.
\end{itemize}
\end{definition}
\medskip
\begin{remark}
In case of no confusion, The identity tensor will be denoted simply $\mathcal{I}$.
\end{remark}
\medskip
\begin{definition}
The inner product of tensors $\mathcal{X}, \mathcal{Y} \in \mathbb{R}^{I_1 \times \ldots \times I_N}$ is defined by 
\begin{equation}
\langle \mathcal{X}, \mathcal{Y}\rangle=\sum_{i_1,\ldots,i_N} \mathcal{X}_{i_1 i_2 \ldots i_N} \mathcal{X}_{i_1 i_2 \ldots i_N}.
\end{equation}
The inner product induces The Frobenius norm as follows
\begin{equation} 
\|\mathcal{X}\|_{F}=\sqrt{\langle \mathcal{X}, \mathcal{X}\rangle}. 
\end{equation}
\end{definition}

\medskip
\begin{definition}
A square $2N$-order tensor $\mathcal{A}$ in invertible (non-singular) if there is a tensor denoted by $\mathcal{A}^{-1}$ of same size such that $\mathcal{A} *_N \mathcal{A}^{-1}=\mathcal{A}^{-1} *_N \mathcal{A} =\mathcal{I}_N$. It is unitary if $\mathcal{A}^\top *_N \mathcal{A}=\mathcal{A} *_N \mathcal{A}^\top =\mathcal{I}_N$.
It is positive semi-definite if $\langle \mathcal{X},\mathcal{A} *_N \mathcal{X}\rangle \geq 0$ for all non-zero $\mathcal{X} \in \mathbb{R}^{I_1 \times \ldots \times I_N}$.
It is positive definite if the inequality is strict.
\end{definition}

\medskip
An important relationship that is easy to prove is the stability of the Frobenius norm under the Einstein product with a unitary tensor.
\medskip
\begin{proposition}
Let $\mathcal{X} \in \mathbb{R}^{I_1 \times \ldots \times I_M \times J_1 \times \ldots \times J_N}$ and $\mathcal{U} \in \mathbb{R}^{I_1 \times \ldots \times I_M \times I_1 \times \ldots \times I_M}$ be a unitary tensor. Then
\begin{equation}
 \|\mathcal{U} *_M \mathcal{X}\|_{F}=\|\mathcal{X}\|_{F}.
\end{equation}
\end{proposition}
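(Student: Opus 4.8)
The plan is to reduce the identity to its squared form, $\|\mathcal{U} *_M \mathcal{X}\|_F^2 = \|\mathcal{X}\|_F^2$, and to verify that via a direct coordinate computation using the definitions of the Einstein product, the transpose, and the unitary condition $\mathcal{U}^T *_M \mathcal{U} = \mathcal{I}_M$. First I would write, for multi-indices $\mathbf{i} = i_1\ldots i_M$ ranging over the $I$-modes and $\mathbf{j} = j_1\ldots j_N$ over the $J$-modes,
\begin{equation*}
(\mathcal{U} *_M \mathcal{X})_{\mathbf{i}\mathbf{j}} = \sum_{\mathbf{k}} \mathcal{U}_{\mathbf{i}\mathbf{k}} \, \mathcal{X}_{\mathbf{k}\mathbf{j}},
\end{equation*}
where $\mathbf{k} = k_1\ldots k_M$ also ranges over the $I$-modes, so that $\mathcal{U} *_M \mathcal{X}$ has the same format as $\mathcal{X}$ and its Frobenius norm is well defined.

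Next I would expand the squared norm through its defining inner product,
\begin{equation*}
\|\mathcal{U} *_M \mathcal{X}\|_F^2 = \sum_{\mathbf{i},\mathbf{j}} (\mathcal{U} *_M \mathcal{X})_{\mathbf{i}\mathbf{j}}^2 = \sum_{\mathbf{j}} \sum_{\mathbf{k},\mathbf{k}'} \Big( \sum_{\mathbf{i}} \mathcal{U}_{\mathbf{i}\mathbf{k}}\, \mathcal{U}_{\mathbf{i}\mathbf{k}'} \Big) \mathcal{X}_{\mathbf{k}\mathbf{j}}\, \mathcal{X}_{\mathbf{k}'\mathbf{j}},
\end{equation*}
having squared the contraction and interchanged the (finite) summations so that the sum over the output indices $\mathbf{i}$ is carried out first. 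The crux is to recognise the parenthesised factor: by the transpose definition $(\mathcal{U}^T)_{\mathbf{k}\mathbf{i}} = \mathcal{U}_{\mathbf{i}\mathbf{k}}$, it equals $(\mathcal{U}^T *_M \mathcal{U})_{\mathbf{k}\mathbf{k}'}$, which by unitarity is $(\mathcal{I}_M)_{\mathbf{k}\mathbf{k}'}$, i.e. $1$ when $\mathbf{k} = \mathbf{k}'$ and $0$ otherwise. Substituting this Kronecker delta collapses the double sum over $\mathbf{k},\mathbf{k}'$ to a single diagonal sum, leaving $\sum_{\mathbf{j}}\sum_{\mathbf{k}} \mathcal{X}_{\mathbf{k}\mathbf{j}}^2 = \langle \mathcal{X}, \mathcal{X}\rangle = \|\mathcal{X}\|_F^2$; taking square roots finishes the argument.

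I do not anticipate a genuine obstacle, since the statement is the matrix fact that unitary maps are isometric transcribed to the Einstein product; the only real work is the multi-index bookkeeping, in particular keeping the contracted $I$-modes of $\mathcal{U}$ distinct from the free $J$-modes of $\mathcal{X}$, and applying the unitary condition in the orientation $\mathcal{U}^T *_M \mathcal{U} = \mathcal{I}_M$ rather than $\mathcal{U} *_M \mathcal{U}^T$. A more abstract alternative would be to first establish the adjoint relation $\langle \mathcal{U} *_M \mathcal{X}, \mathcal{Z}\rangle = \langle \mathcal{X}, \mathcal{U}^T *_M \mathcal{Z}\rangle$ together with associativity of $*_M$ and $\mathcal{I}_M *_M \mathcal{X} = \mathcal{X}$, whence $\|\mathcal{U} *_M \mathcal{X}\|_F^2 = \langle \mathcal{X}, \mathcal{U}^T *_M \mathcal{U} *_M \mathcal{X}\rangle = \langle \mathcal{X}, \mathcal{X}\rangle$; but since these auxiliary identities are not recorded in the excerpt, I would favour the self-contained coordinate computation above.
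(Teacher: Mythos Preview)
Your proof is correct and follows the same approach the paper indicates: the paper simply remarks that the result is ``straightforward using the inner product definition,'' and your coordinate expansion of $\|\mathcal{U}*_M\mathcal{X}\|_F^2$ via the inner product, followed by invoking $\mathcal{U}^T*_M\mathcal{U}=\mathcal{I}_M$, is precisely that computation carried out in full.
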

The proof is straightforward using the inner product definition.
\medskip
\begin{proposition}\cite{Wang2019}
Let $\mathcal{X}, \mathcal{Y} \in \mathbb{R}^{I_1 \times \ldots \times I_N \times J_1 \times \ldots \times J_M}$. We have
\begin{equation}
\label{prop:trace_conjuguate}
\begin{aligned}
\langle\mathcal{X}, \mathcal{Y}\rangle&=\operatorname{Tr}\left(\mathcal{X}^\top *_N \mathcal{Y}\right)\\
&= \operatorname{Tr}(\mathcal{Y} *_M \mathcal{X}^\top ).
\end{aligned}
\end{equation}
\end{proposition}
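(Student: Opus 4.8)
The plan is to prove both equalities directly by unfolding every definition at the level of scalar entries, since no deeper structural result is required here. First I would fix the convention for the trace of a square even-order tensor $\mathcal{A} \in \mathbb{R}^{J_1\times\ldots\times J_M\times J_1\times\ldots\times J_M}$, namely $\operatorname{Tr}(\mathcal{A}) = \sum_{j_1,\ldots,j_M}\mathcal{A}_{j_1\ldots j_M j_1\ldots j_M}$, the sum of the diagonal entries in the sense of the diagonal-tensor definition given above. This is the only ingredient not already spelled out in the excerpt, and pinning it down is what makes the argument self-contained.

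For the first equality I would compute the entries of $\mathcal{X}^T *_N \mathcal{Y}$. Using the transpose definition $(\mathcal{X}^T)_{j_1\ldots j_M i_1\ldots i_N} = \mathcal{X}_{i_1\ldots i_N j_1\ldots j_M}$, and noting that $*_N$ contracts the $N$ indices of the $\mathbf{I}$-block, one obtains a tensor of size $J_1\times\ldots\times J_M\times J_1\times\ldots\times J_M$ with entries $(\mathcal{X}^T *_N \mathcal{Y})_{j_1\ldots j_M j'_1\ldots j'_M} = \sum_{i_1,\ldots,i_N}\mathcal{X}_{i_1\ldots i_N j_1\ldots j_M}\,\mathcal{Y}_{i_1\ldots i_N j'_1\ldots j'_M}$. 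Setting $j'_r = j_r$ and summing over $j_1,\ldots,j_M$ to form the trace merges the diagonal indices with the contracted indices into the single double sum $\sum_{i_1,\ldots,i_N,\,j_1,\ldots,j_M}\mathcal{X}_{i_1\ldots i_N j_1\ldots j_M}\,\mathcal{Y}_{i_1\ldots i_N j_1\ldots j_M}$, which is exactly $\langle\mathcal{X},\mathcal{Y}\rangle$ by the inner-product definition.

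For the second equality I would proceed symmetrically, contracting instead the $M$ indices of the $\mathbf{J}$-block via $*_M$. Here $\mathcal{Y}*_M\mathcal{X}^T$ is a square tensor of size $I_1\times\ldots\times I_N\times I_1\times\ldots\times I_N$ with entries $(\mathcal{Y}*_M\mathcal{X}^T)_{i_1\ldots i_N i'_1\ldots i'_N}=\sum_{j_1,\ldots,j_M}\mathcal{Y}_{i_1\ldots i_N j_1\ldots j_M}\,\mathcal{X}_{i'_1\ldots i'_N j_1\ldots j_M}$, and its trace (diagonal $i'_r=i_r$, summed over $i_1,\ldots,i_N$) produces the same double sum. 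The only point demanding care is the index bookkeeping: one must check that the shapes line up so that the contraction in $*_N$ runs over the $\mathbf{I}$-block while that in $*_M$ runs over the $\mathbf{J}$-block, and that the diagonal selection in each trace then ranges over the remaining block. Everything else is a finite rearrangement of sums, so no ordering or convergence issue arises; I expect this index matching to be the only genuine, and rather minor, obstacle.
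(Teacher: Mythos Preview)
Your argument is correct: the entry-wise unfolding is exactly what is needed, the index bookkeeping you describe is accurate, and the trace convention you fix is the one implicitly used in the paper. Note that the paper does not actually supply its own proof of this proposition---it is quoted from \cite{Wang2019} and stated without argument---so there is no alternative approach to compare against; your direct computation is the standard one and would serve perfectly well as the missing justification.
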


\begin{proposition}\cite{Einstein_inverse}
Given tensors $\mathcal{X} \in \mathbb{R}^{I_1 \times \ldots \times I_N \times K_1 \times \ldots \times K_N}$, $\mathcal{Y} \in \mathbb{R}^{K_1 \times \ldots \times K_N \times J_1 \times \ldots \times J_M}$. We have
\begin{equation}
 \label{eq:Einstein_transpose}
 \left(\mathcal{X} *_N \mathcal{Y}\right)^\top=\mathcal{Y}^\top *_N \mathcal{X}^\top.
\end{equation}
\end{proposition}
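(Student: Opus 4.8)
The plan is to prove the identity by a direct element-wise computation, mirroring the matrix proof of $(XY)^T = Y^T X^T$ but carrying along the full multi-index structure. Since $\Psi$ is a linear isomorphism and two tensors are equal exactly when all their entries agree, I would fix arbitrary multi-indices and evaluate both sides of the claimed equation at a generic entry; the tensor identity then reduces to an equality of scalars that must hold for every admissible choice of indices.

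First I would expand the left-hand side. By the definition of the transpose, the entry $\bigl((\mathcal{X} *_N \mathcal{Y})^T\bigr)_{j_1 \ldots j_M i_1 \ldots i_N}$ equals $(\mathcal{X} *_N \mathcal{Y})_{i_1 \ldots i_N j_1 \ldots j_M}$, which by the definition of the Einstein product expands to $\sum_{k_1 \ldots k_N} \mathcal{X}_{i_1 \ldots i_N k_1 \ldots k_N} \, \mathcal{Y}_{k_1 \ldots k_N j_1 \ldots j_M}$.

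Next I would compute the matching entry of the right-hand side, $(\mathcal{Y}^T *_N \mathcal{X}^T)_{j_1 \ldots j_M i_1 \ldots i_N}$. Applying the Einstein product definition to $\mathcal{Y}^T$ and $\mathcal{X}^T$ gives $\sum_{k_1 \ldots k_N} (\mathcal{Y}^T)_{j_1 \ldots j_M k_1 \ldots k_N}\,(\mathcal{X}^T)_{k_1 \ldots k_N i_1 \ldots i_N}$, and substituting the transpose relations $(\mathcal{Y}^T)_{j_1 \ldots j_M k_1 \ldots k_N} = \mathcal{Y}_{k_1 \ldots k_N j_1 \ldots j_M}$ and $(\mathcal{X}^T)_{k_1 \ldots k_N i_1 \ldots i_N} = \mathcal{X}_{i_1 \ldots i_N k_1 \ldots k_N}$ turns this into $\sum_{k_1 \ldots k_N} \mathcal{Y}_{k_1 \ldots k_N j_1 \ldots j_M}\,\mathcal{X}_{i_1 \ldots i_N k_1 \ldots k_N}$. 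Because the summands are real scalars, commutativity of multiplication lets me reorder the two factors, so this coincides term by term with the left-hand expression over the same contraction set $k_1, \ldots, k_N$; hence the two tensors have identical entries and are equal.

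The argument is purely combinatorial, so there is no analytic obstacle: the only delicate point is the index bookkeeping. I would first confirm that the sizes are compatible, namely that $\mathcal{Y}^T \in \mathbb{R}^{J_1 \times \ldots \times J_M \times K_1 \times \ldots \times K_N}$ and $\mathcal{X}^T \in \mathbb{R}^{K_1 \times \ldots \times K_N \times I_1 \times \ldots \times I_N}$, so that the contracted modes $K_1, \ldots, K_N$ align and both sides genuinely live in $\mathbb{R}^{J_1 \times \ldots \times J_M \times I_1 \times \ldots \times I_N}$. I would then make sure the ordering of the free index blocks—the $j$-block preceding the $i$-block—matches on both sides. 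The main place where an error could slip in is the precise placement of the index groups under transposition, so I would state the transpose convention explicitly before expanding either side.
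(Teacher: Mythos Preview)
Your element-wise argument is correct and complete: the size check, the transpose convention, and the commutativity of scalar multiplication are exactly what is needed. Note, however, that the paper does not supply its own proof of this proposition---it simply cites the result from \cite{Einstein_inverse}---so there is no in-paper argument to compare against; your direct verification is the standard one.
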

The isomorphism $\phi$ has some properties that would be useful in the following.
\medskip
\begin{proposition}\cite{Wang2019} \label{prop:automorphisme}
Given the tensors $\mathcal{X}$ and $\mathcal{Y}$ of appropriate size then, we have 
$\phi$ is a multiplicative morphism with respect the Einstein product, i.e., $\Psi(\mathcal{X} *_N \mathcal{Y})=\Psi(\mathcal{X}) \Psi(\mathcal{Y})$.
\end{proposition}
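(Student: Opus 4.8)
The plan is to prove the identity entry by entry, comparing the $(i,j)$ component of the matrix $\Psi(\mathcal{X} *_N \mathcal{Y})$ with the $(i,j)$ component of the ordinary matrix product $\Psi(\mathcal{X})\Psi(\mathcal{Y})$. Write $\mathcal{X} \in \mathbb{R}^{I_1 \times \cdots \times I_M \times K_1 \times \cdots \times K_N}$ and $\mathcal{Y} \in \mathbb{R}^{K_1 \times \cdots \times K_N \times J_1 \times \cdots \times J_M}$, so that the contracted modes $K_1, \ldots, K_N$ form the column multi-index of $\Psi(\mathcal{X})$ and the row multi-index of $\Psi(\mathcal{Y})$. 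Fix single indices $i = \operatorname{ivec}(\mathbf{i}, \mathbf{I})$ and $j = \operatorname{ivec}(\mathbf{j}, \mathbf{J})$, and let $k = \operatorname{ivec}(\mathbf{k}, \mathbf{K})$ denote the flattened contraction index.

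First I would expand the left-hand side using the definition of $\Psi$ followed by the definition of the Einstein product:
\begin{equation*}
\Psi(\mathcal{X} *_N \mathcal{Y})_{ij} = (\mathcal{X} *_N \mathcal{Y})_{i_1 \ldots i_M j_1 \ldots j_M} = \sum_{k_1, \ldots, k_N} \mathcal{X}_{i_1 \ldots i_M k_1 \ldots k_N}\, \mathcal{Y}_{k_1 \ldots k_N j_1 \ldots j_M}.
\end{equation*}
Next I would expand the right-hand side with the matrix-product rule and the definition of $\Psi$ applied to each factor:
\begin{equation*}
\bigl(\Psi(\mathcal{X})\Psi(\mathcal{Y})\bigr)_{ij} = \sum_{k=1}^{|\mathbf{K}|} \Psi(\mathcal{X})_{ik}\, \Psi(\mathcal{Y})_{kj} = \sum_{k=1}^{|\mathbf{K}|} \mathcal{X}_{i_1 \ldots i_M k_1 \ldots k_N}\, \mathcal{Y}_{k_1 \ldots k_N j_1 \ldots j_M},
\end{equation*}
where on the right each single index $k$ is understood as $(k_1, \ldots, k_N) = \operatorname{ivec}^{-1}(k, \mathbf{K})$.

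The crux of the argument --- and the only step requiring genuine care --- is that the scalar sum over the single index $k \in \{1, \ldots, |\mathbf{K}|\}$ in the matrix product coincides with the multi-index sum over $(k_1, \ldots, k_N)$ in the Einstein product. This is precisely the statement that $\operatorname{ivec}(\cdot, \mathbf{K})$ is a bijection between the multi-indices $(k_1, \ldots, k_N)$, $1 \le k_r \le K_r$, and $\{1, \ldots, |\mathbf{K}|\}$ with $|\mathbf{K}| = \prod_{l=1}^N K_l$; since $\Psi$ uses one fixed ivec ordering, the column labeling of $\Psi(\mathcal{X})$ and the row labeling of $\Psi(\mathcal{Y})$ are the same enumeration of $\mathbf{K}$, so summing over $k$ is merely a reindexing of the summation over $(k_1, \ldots, k_N)$. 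Invoking this bijection lets me identify the two displays term by term, yielding $\Psi(\mathcal{X} *_N \mathcal{Y})_{ij} = (\Psi(\mathcal{X})\Psi(\mathcal{Y}))_{ij}$ for every $i, j$, which is the claim.

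I expect the main obstacle to be purely bookkeeping: establishing the bijectivity of $\operatorname{ivec}$ (or simply citing it, since it is built into the definition of the unfolding $\Psi$) and confirming that the same ordering convention is applied consistently to the contracted modes of both factors. Once that consistency is pinned down, no further computation is needed and the morphism property follows immediately; linearity and injectivity of $\Psi$, already noted in the excerpt, then make it an isomorphism of algebras.
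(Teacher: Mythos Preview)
Your argument is correct: the entrywise verification via the bijection $\operatorname{ivec}(\cdot,\mathbf{K})$ between the multi-index set and $\{1,\ldots,|\mathbf{K}|\}$ is exactly how one establishes this identity, and your bookkeeping is sound. Note, however, that the paper does not supply its own proof of this proposition; it simply cites the result from \cite{Wang2019}, so there is no in-paper argument to compare against --- your direct computation is the standard (and essentially only) route.
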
\medskip

It allows us to prove the Einstein Tensor Spectral Theorem.
\medskip
\begin{theorem}[Einstein Tensor Spectral Theorem]\label{thm:Spectral}
A symmetric tensor is diagonalizable via the Einstein product.
\end{theorem}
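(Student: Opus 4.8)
The plan is to transport the classical matrix spectral theorem to the tensor setting through the unfolding isomorphism $\Psi$, exploiting the fact that $\Psi$ is an algebra morphism with respect to the Einstein product (Proposition~\ref{prop:automorphisme}) and that it additionally respects transposition, the identity, and diagonality. In short: unfold the symmetric tensor to a symmetric matrix, diagonalize the matrix, and fold the decomposition back.

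First I would record three compatibility properties of $\Psi$ on square even tensors $\mathcal{A}\in\mathbb{R}^{I_1\times\cdots\times I_N\times I_1\times\cdots\times I_N}$, each proved by a direct index computation through the definition of $\operatorname{ivec}$. The first is $\Psi(\mathcal{A}^T)=\Psi(\mathcal{A})^T$: since $(\mathcal{A}^T)_{\mathbf{j}\mathbf{i}}=\mathcal{A}_{\mathbf{i}\mathbf{j}}$ and the same multi-index pair $(\mathbf{i},\mathbf{j})$ is sent to $(i,j)$ by $\operatorname{ivec}$, the tensor transpose becomes the matrix transpose. The second is $\Psi(\mathcal{I}_N)=I$, the identity matrix, because the only nonzero entries $\mathcal{A}_{\mathbf{i}\mathbf{i}}=1$ land on the matrix diagonal $i=i$. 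The third is that $\Psi$ carries diagonal tensors to diagonal matrices and conversely, for the same reason as the second. All three hold equally for $\Psi^{-1}$ since $\Psi$ is a linear isomorphism.

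Then I apply the morphism together with these compatibility properties. Let $\mathcal{A}$ be symmetric, so that $A:=\Psi(\mathcal{A})$ satisfies $A^T=\Psi(\mathcal{A}^T)=\Psi(\mathcal{A})=A$ by the first property; hence $A$ is a real symmetric matrix. By the matrix spectral theorem there is an orthogonal $Q$ and a diagonal $\Lambda$ with $A=Q\Lambda Q^T$. Setting $\mathcal{Q}=\Psi^{-1}(Q)$ and $\mathcal{D}=\Psi^{-1}(\Lambda)$ and applying $\Psi^{-1}$ together with Proposition~\ref{prop:automorphisme} and the transpose property gives
\begin{equation*}
\mathcal{A}=\Psi^{-1}\!\left(Q\Lambda Q^T\right)=\mathcal{Q}*_N\mathcal{D}*_N\mathcal{Q}^T.
\end{equation*}
The diagonality property shows $\mathcal{D}$ is a diagonal tensor, and transporting $Q^TQ=QQ^T=I$ through $\Psi^{-1}$ via the morphism, the transpose property, and $\Psi^{-1}(I)=\mathcal{I}_N$ yields $\mathcal{Q}^T*_N\mathcal{Q}=\mathcal{Q}*_N\mathcal{Q}^T=\mathcal{I}_N$, i.e., $\mathcal{Q}$ is unitary. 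This is exactly the asserted diagonalization via the Einstein product.

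I do not expect a genuine obstacle here: once $\Psi$ is known to be a transpose- and identity-preserving algebra isomorphism, the theorem is a formal consequence of its matrix analogue. The only point requiring care is the bookkeeping in the index computations of the three compatibility properties, in particular checking that the single $\operatorname{ivec}$ ordering used for both the row and column multi-indices makes the tensor diagonal $\mathcal{A}_{\mathbf{i}\mathbf{i}}$ correspond precisely to the matrix diagonal $A_{ii}$; this is what guarantees that ``diagonal'' carries the same meaning on both sides of the isomorphism, and hence that the folded factor $\mathcal{D}$ is genuinely diagonal in the sense of the earlier definition.
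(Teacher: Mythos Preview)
Your proposal is correct and follows essentially the same route as the paper: unfold via $\Psi$, apply the classical spectral theorem to the resulting symmetric matrix, and fold back using the morphism property of Proposition~\ref{prop:automorphisme}. If anything, you are more explicit than the paper in recording the compatibility of $\Psi$ with transposition, the identity, and diagonality, which the paper's proof uses without comment.
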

\medskip
\begin{proof}
The proof is using the isomorphism and its properties \ref{prop:automorphisme}.

\medskip
\noindent Let $\mathcal{X}$ be a symmetric tensor of size $ \mathbb{R}^{I_1 \times \ldots \times I_N \times I_1 \times \ldots \times I_N}$, then $\Psi(\mathcal{X})$ is symmetric, and by the spectral theorem, there exists an orthogonal matrix $U$ such that $U^\top \Psi(\mathcal{X}) U=\Lambda$, where $\Lambda$ is a diagonal matrix.\\
Then, $\Psi(\mathcal{X})=U \Lambda U^\top$, and $\mathcal{X}=\phi^{-1}(U \Lambda U^\top)=\Psi^{-1}(U) *_N \Psi^{-1}(\Lambda) *_N \Psi^{-1}(U^\top)=\Psi^{-1}(U) *_N \Psi^{-1}(\Lambda) *_N \Psi^{-1}(U)^\top$, with $\Psi^{-1}(U)$ is a unitary, and $\Psi^{-1}(\Lambda)$ is diagonal tensor.
\end{proof}

\medskip
The cyclic property of the trace with Einstein product is also verified, which is needed in the sequel.
\medskip
\begin{proposition}[Cyclic property of the trace]\label{pro:cyclic_trace}
Let $\mathcal{X} \in \mathbb{R}^{I_1 \times \ldots \times I_M \times K_1 \times \ldots \times K_N}$, $
\mathcal{Y} \in \mathbb{R}^{K_1 \times \ldots \times K_N \times I_1 \times \ldots \times I_M}$, and $ \mathcal{Z} \in \mathbb{R}^{K_1 \times \ldots \times K_N \times K_1 \times \ldots \times K_N} $. We have
\begin{equation}
\label{eq:Einstein_cyclic}
\operatorname{Tr}\left(\mathcal{X} *_N \mathcal{Z} *_N \mathcal{Y}\right)=\operatorname{Tr}\left(\mathcal{Y} *_M \mathcal{X} *_N \mathcal{Z} \right).
\end{equation}
\end{proposition}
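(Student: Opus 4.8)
The plan is to transport the identity to the matrix setting through the unfolding isomorphism $\Psi$ and then invoke the classical cyclic property of the ordinary matrix trace. First I would record the two ingredients that make this work. On one hand, Proposition \ref{prop:automorphisme} tells us that $\Psi$ turns Einstein products into matrix products; although it is phrased for $*_N$, the identical statement holds for the contraction $*_M$ after relabelling the contracted modes, so I may freely use $\Psi(\mathcal{A} *_N \mathcal{B})=\Psi(\mathcal{A})\Psi(\mathcal{B})$ and $\Psi(\mathcal{A} *_M \mathcal{B})=\Psi(\mathcal{A})\Psi(\mathcal{B})$ whenever the products are defined. On the other hand, for a square even tensor $\mathcal{A}$ the tensor trace $\operatorname{Tr}(\mathcal{A})=\sum_{i_1,\ldots,i_N}\mathcal{A}_{i_1\ldots i_N i_1\ldots i_N}$ coincides with the matrix trace of its unfolding: since the row and column multi-indices range over the same set and are flattened by the same map $\operatorname{ivec}(\cdot,\mathbf{I})$, the diagonal entry $\Psi(\mathcal{A})_{ii}$ with $i=\operatorname{ivec}(\mathbf{i},\mathbf{I})$ equals $\mathcal{A}_{i_1\ldots i_N i_1\ldots i_N}$, and summing over $\mathbf{i}$ yields $\operatorname{Tr}(\mathcal{A})=\operatorname{Tr}(\Psi(\mathcal{A}))$.

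Next I would verify that both sides are genuine square even tensors, so that their traces are defined. The product $\mathcal{X}*_N\mathcal{Z}$ lives in $\mathbb{R}^{I_1\times\ldots\times I_M\times K_1\times\ldots\times K_N}$, and contracting once more against $\mathcal{Y}$ gives $\mathcal{X}*_N\mathcal{Z}*_N\mathcal{Y}\in\mathbb{R}^{I_1\times\ldots\times I_M\times I_1\times\ldots\times I_M}$, which is square and even. Symmetrically, $\mathcal{Y}*_M\mathcal{X}\in\mathbb{R}^{K_1\times\ldots\times K_N\times K_1\times\ldots\times K_N}$, and then $\mathcal{Y}*_M\mathcal{X}*_N\mathcal{Z}\in\mathbb{R}^{K_1\times\ldots\times K_N\times K_1\times\ldots\times K_N}$ is again square and even. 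Hence both traces make sense, even though the two sides are tensors of different sizes.

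With this in place the computation is short. Writing $X=\Psi(\mathcal{X})\in\mathbb{R}^{|\mathbf{I}|\times|\mathbf{K}|}$, $Y=\Psi(\mathcal{Y})\in\mathbb{R}^{|\mathbf{K}|\times|\mathbf{I}|}$ and $Z=\Psi(\mathcal{Z})\in\mathbb{R}^{|\mathbf{K}|\times|\mathbf{K}|}$, the trace-preservation identity together with the morphism property gives
\begin{equation*}
\operatorname{Tr}(\mathcal{X}*_N\mathcal{Z}*_N\mathcal{Y})=\operatorname{Tr}(XZY),\qquad \operatorname{Tr}(\mathcal{Y}*_M\mathcal{X}*_N\mathcal{Z})=\operatorname{Tr}(YXZ).
\end{equation*}
Since $X$, $Z$, $Y$ are matrices of compatible sizes, the classical cyclic invariance $\operatorname{Tr}(XZY)=\operatorname{Tr}(YXZ)$ finishes the argument.

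I expect no serious obstacle here; the only point requiring care is organizational rather than deep. I must make sure the morphism property of $\Psi$ is legitimately applied to the mixed contraction orders $*_N$ and $*_M$ on the two sides, and that the flattening convention $\operatorname{ivec}$ is used consistently so that the identification $\operatorname{Tr}(\mathcal{A})=\operatorname{Tr}(\Psi(\mathcal{A}))$ is exact for both square even tensors. Once the bookkeeping of the index sets $\mathbf{I}$ and $\mathbf{K}$ is fixed, everything reduces to the matrix case.
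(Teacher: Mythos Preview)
Your proposal is correct. The paper does not actually supply a proof of Proposition~\ref{pro:cyclic_trace}; it states the result and moves on. Your argument via the unfolding isomorphism $\Psi$---pushing everything to matrices with Proposition~\ref{prop:automorphisme}, identifying the tensor trace with the matrix trace of the unfolding, and then invoking the classical cyclic property---is precisely the technique the paper uses for neighbouring results (for instance the Einstein Tensor Spectral Theorem~\ref{thm:Spectral} and the first proof of Theorem~\ref{thm:tr_pos_def}), so it is entirely in the intended spirit.
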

\medskip

\begin{theorem}\cite{Einstein_inverse}
Let $\mathcal{X} \in \mathbb{R}^{I_1 \times \ldots \times I_M \times K_1 \times \ldots \times K_N}$, the Einstein singular value decomposition (E-SVD) of $\mathcal{X}$ is defined by
\begin{equation}
\mathcal{X}=\mathcal{U} *_M \mathcal{S} *_N \mathcal{V}^\top,
\end{equation}
where $\mathcal{U} \in \mathbb{R}^{I_1 \times \ldots \times I_M \times I_1 \times \ldots \times I_M}, \mathcal{S} \in \mathbb{R}^{I_1 \times \ldots \times I_M \times K_1 \times \ldots \times K_N}, \mathcal{V} \in \mathbb{R}^{K_1 \times \ldots \times K_N \times K_1 \times \ldots \times K_N}$ with the following properties\\
$\mathcal{U}$ and $\mathcal{V}$ are orthogonal, where $\mathcal{U}_{: \ldots : i_1 \ldots i_M}, \mathcal{V}_{: \ldots : j_1 \ldots j_N}$ are the left and right singular tensors of $\mathcal{X}$, respectively.
If $N<M$, then \[\mathcal{S}_{i_1 \ldots i_M k_1 \ldots k_N}=\begin{cases} d_{k_1 \ldots k_N} & \text{if } (i_1,\ldots,i_N)=(k_1,\ldots,k_N) \text{ and } (i_{N+1},\ldots,i_M)=(1,\ldots,1) \\ 0 & \text{otherwise.} \end{cases}\].\\
If $N=M$, then $\mathcal{S}_{i_1 \ldots i_M k_1 \ldots k_N}=\begin{cases} d_{k_1 \ldots k_N} & \text{if } (i_1,\ldots,i_M)=(k_1,\ldots,k_M) \\0 & \text{otherwise.}\end{cases}$\\
The numbers $d_{k_1 \ldots k_N}$ are the singular values of $\mathcal{X}$ with the decreasing order
\[d_{{1,\ldots,1}} \geq d_{2,1,\ldots,1}\geq \ldots \geq d_{\widehat{K}_1,1,\ldots,1} \\ \geq d_{1,2,\ldots,1} \geq \ldots \geq d_{1,\widehat{K}_2,\ldots,1} \geq \ldots \geq d_{\widehat{K}_1,\ldots,\widehat{K}_P} \geq 0,\]
with $P=\min(N,L),\; \widehat{K}_r=\min(I_r,K_r), \; r=1,\ldots,P$.
\end{theorem}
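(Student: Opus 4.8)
The plan is to lift the classical matrix singular value decomposition through the unfolding isomorphism $\Psi$, in direct analogy with the proof of the Einstein Tensor Spectral Theorem (Theorem~\ref{thm:Spectral}). First I would unfold $\mathcal{X}$ to the matrix $X = \Psi(\mathcal{X}) \in \mathbb{R}^{|\mathbf{I}| \times |\mathbf{K}|}$, where $|\mathbf{I}| = \prod_{i=1}^{M} I_i$ and $|\mathbf{K}| = \prod_{j=1}^{N} K_j$. Applying the ordinary SVD yields $X = U S V^T$ with $U \in \mathbb{R}^{|\mathbf{I}| \times |\mathbf{I}|}$ and $V \in \mathbb{R}^{|\mathbf{K}| \times |\mathbf{K}|}$ orthogonal and $S \in \mathbb{R}^{|\mathbf{I}| \times |\mathbf{K}|}$ the rectangular diagonal matrix carrying the singular values in decreasing order.

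Next I would fold each factor back, setting $\mathcal{U} = \Psi^{-1}(U)$, $\mathcal{S} = \Psi^{-1}(S)$ and $\mathcal{V} = \Psi^{-1}(V)$, which automatically have the sizes asserted in the statement because $\Psi$ respects the splitting of the index set. Using that $\Psi$ is a multiplicative morphism for the Einstein product (Proposition~\ref{prop:automorphisme}), together with the fact that unfolding commutes with the tensor transpose (so that $\Psi(\mathcal{V}^T) = \Psi(\mathcal{V})^T = V^T$, which is immediate from the definitions of the transpose tensor and of $\Psi$), I would obtain
\[
\Psi\!\left(\mathcal{U} *_M \mathcal{S} *_N \mathcal{V}^T\right) = U S V^T = X = \Psi(\mathcal{X}),
\]
and, since $\Psi$ is a bijection, this gives $\mathcal{X} = \mathcal{U} *_M \mathcal{S} *_N \mathcal{V}^T$. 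The unitarity of $\mathcal{U}$ and $\mathcal{V}$ transfers in the same fashion: applying $\Psi$ to the candidate identities and invoking the morphism property turns $U^T U = U U^T = I$ into $\mathcal{U}^T *_M \mathcal{U} = \mathcal{U} *_M \mathcal{U}^T = \mathcal{I}_M$ (here one uses $\Psi(\mathcal{I}_M) = I$, which holds because the ones of $\mathcal{I}_M$ sit exactly at the matched multi-index pairs), and likewise for $\mathcal{V}$ with $\mathcal{I}_N$. The left and right singular tensors are then simply the foldings of the columns of $U$ and $V$.

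The step I expect to be the main obstacle is pinning down the precise entrywise description of $\mathcal{S}$ and the displayed ordering of the singular values, since everything else is a purely formal transfer through $\Psi$. Here one must trace the index map $\operatorname{ivec}$ carefully: the matrix $S$ is supported on its diagonal $\{(p,p) : 1 \le p \le \min(|\mathbf{I}|,|\mathbf{K}|)\}$, and I would translate the condition $\operatorname{ivec}(\mathbf{i},\mathbf{I}) = \operatorname{ivec}(\mathbf{k},\mathbf{K})$ back into the multi-index conditions appearing in the statement. This is exactly where the dichotomy $N < M$ versus $N = M$ originates: when $N = M$ the diagonal support folds to the multi-indices with $(i_1,\ldots,i_M) = (k_1,\ldots,k_M)$, whereas when $N < M$ the surplus output modes $i_{N+1},\ldots,i_M$ must be frozen to $1$ for an index to land on the folded diagonal. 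Finally, the decreasing order of the $d_{k_1 \ldots k_N}$ is nothing but the decreasing order of the matrix singular values read along the enumeration induced by $\operatorname{ivec}$, so it follows at once once the index correspondence is established.
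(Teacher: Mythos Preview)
The paper does not supply its own proof of this theorem; it is stated with a citation to \cite{Einstein_inverse} and is immediately followed by the next definition, with no argument given. Your approach---transporting the classical matrix SVD through the unfolding isomorphism $\Psi$ and its multiplicative property (Proposition~\ref{prop:automorphisme})---is correct and is exactly the device the paper uses for the neighbouring Einstein Tensor Spectral Theorem (Theorem~\ref{thm:Spectral}), so your proposal is entirely in the spirit of the paper's own techniques.

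One small caution on the step you rightly flag as the main obstacle: the equality $\operatorname{ivec}(\mathbf{i},\mathbf{I}) = \operatorname{ivec}(\mathbf{k},\mathbf{K})$ does \emph{not} in general collapse to $(i_1,\ldots,i_M)=(k_1,\ldots,k_M)$ when the mode sizes $I_r$ and $K_r$ differ, so the entrywise description of $\mathcal{S}$ in the statement should be read as a convention for where the singular values are placed in the folded tensor rather than as the literal preimage of the matrix diagonal under $\Psi$; your bookkeeping just needs to respect that convention.
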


\medskip
 We define the eigenvalues and eigen-tensors of a tensor with the following.
 \medskip
\begin{definition}\cite{wang2022generalized}
Let a square $2N$-th order tensors $\mathcal{A},\mathcal{B} \in \mathbb{R}^{I_1 \times \ldots \times I_N \times I_1 \times \ldots \times I_N}$, then
\begin{itemize}
\item \textbf{Tensor Eigenvalue problem:} If there is a non null $\mathcal{X} \in \mathbb{R}^{I_1 \times \ldots \times I_N}$, and $\lambda \in \mathbb{R}$ such that $\mathcal{A} *_{N} \mathcal{X}=\lambda \mathcal{X}$, then $\mathcal{X}$ is called an eigen-tensor of $\mathcal{A}$, and $\lambda$ is the corresponding eigenvalue.
\item \textbf{Tensor generalized Eigenvalue problem:} If there is a non null $\mathcal{X} \in \mathbb{R}^{I_1 \times \ldots \times I_N}$, and $\lambda \in \mathbb{R}$ such that $\mathcal{A} *_{N} \mathcal{X}=\lambda \mathcal{B} *_N \mathcal{X}$, then $\mathcal{X}$ is called an eigen-tensor of the pair $\{ \mathcal{A},\mathcal{B} \}$, and $\lambda$ is the corresponding eigenvalue.
\end{itemize}
\end{definition}\medskip
\begin{remark}
If $N=1$, the two definitions above coincide with the eigenvalue and generalized eigenvalue problems, respectively.
\end{remark}

\medskip
We can also show a relationship between the singular values and the eigenvalues of a tensor.
\medskip
\begin{proposition} 
Let the E-SVD of $\mathcal{X} \in \mathbb{R}^{I_1 \times \ldots \times I_M \times K_1 \times \ldots \times K_N}$, defined as $\mathcal{X}=\mathcal{U} *_M \mathcal{S} *_N \mathcal{V}^\top$, then
\begin{itemize}
\item The eigenvalues of $\mathcal{X} *_N \mathcal{X}^\top$ and $\mathcal{X}^\top \times_M \mathcal{X}$ are the squared singular values of $\mathcal{X}$.
\item The eigen-tensors of $\mathcal{X} *_N \mathcal{X}^\top$ are the left singular tensors of $\mathcal{X}$.
\item The eigen-tensors of $\mathcal{X}^\top *_M \mathcal{X}$ are the right singular tensors of $\mathcal{X}$.
\end{itemize}
\end{proposition}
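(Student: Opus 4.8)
The plan is to transport the classical matrix argument (relating the SVD to the eigendecompositions of $XX^{T}$ and $X^{T}X$) through the Einstein product, using only facts already available: the transpose reversal rule \eqref{eq:Einstein_transpose}, the unitarity relations $\mathcal{U}^{T} *_M \mathcal{U} = \mathcal{I}$ and $\mathcal{V}^{T} *_N \mathcal{V} = \mathcal{I}$, the spectral form from Theorem~\ref{thm:Spectral}, and the explicit diagonal shape of $\mathcal{S}$. Associativity of $*$ (inherited from the multiplicative isomorphism of Proposition~\ref{prop:automorphisme}) lets me chain products freely. Throughout I read the tensor $\mathcal{X}^{T} \times_M \mathcal{X}$ in the statement as the Einstein product $\mathcal{X}^{T} *_M \mathcal{X}$. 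First I would compute $\mathcal{X}^{T}$ by applying \eqref{eq:Einstein_transpose} twice to $\mathcal{X} = (\mathcal{U} *_M \mathcal{S}) *_N \mathcal{V}^{T}$: the outer contraction gives $\mathcal{X}^{T} = \mathcal{V} *_N (\mathcal{U} *_M \mathcal{S})^{T}$, and the inner one gives $(\mathcal{U} *_M \mathcal{S})^{T} = \mathcal{S}^{T} *_M \mathcal{U}^{T}$, so that $\mathcal{X}^{T} = \mathcal{V} *_N \mathcal{S}^{T} *_M \mathcal{U}^{T}$. A dimension check confirms the contraction orders ($*_N$ outside, $*_M$ inside).

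Next I would form $\mathcal{X} *_N \mathcal{X}^{T}$ and use $\mathcal{V}^{T} *_N \mathcal{V} = \mathcal{I}$ to collapse the central factor:
\begin{equation*}
\mathcal{X} *_N \mathcal{X}^{T} = \mathcal{U} *_M \mathcal{S} *_N \mathcal{V}^{T} *_N \mathcal{V} *_N \mathcal{S}^{T} *_M \mathcal{U}^{T} = \mathcal{U} *_M (\mathcal{S} *_N \mathcal{S}^{T}) *_M \mathcal{U}^{T}.
\end{equation*}
Since $\mathcal{U}$ is unitary and $\mathcal{S} *_N \mathcal{S}^{T}$ is diagonal, the right-hand side is exactly a spectral decomposition in the sense of Theorem~\ref{thm:Spectral}. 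To extract the eigen-data rigorously, I would right-multiply by a left singular tensor $\mathcal{U}^{(\mathbf{i})} := \mathcal{U}_{: \ldots : i_1 \ldots i_M}$: unitarity forces $\mathcal{U}^{T} *_M \mathcal{U}^{(\mathbf{i})}$ to be the canonical basis tensor supported at the multi-index $\mathbf{i}$, the diagonal tensor $\mathcal{S} *_N \mathcal{S}^{T}$ then scales it by its $\mathbf{i}$-th diagonal entry $\lambda_{\mathbf{i}}$, and a final product with $\mathcal{U}$ restores $\mathcal{U}^{(\mathbf{i})}$. This yields $(\mathcal{X} *_N \mathcal{X}^{T}) *_M \mathcal{U}^{(\mathbf{i})} = \lambda_{\mathbf{i}}\, \mathcal{U}^{(\mathbf{i})}$, so the left singular tensors are eigen-tensors with the diagonal entries of $\mathcal{S} *_N \mathcal{S}^{T}$ as eigenvalues.

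The statement for $\mathcal{X}^{T} *_M \mathcal{X}$ follows by the symmetric computation: substituting both factorizations and collapsing $\mathcal{U}^{T} *_M \mathcal{U} = \mathcal{I}$ gives $\mathcal{X}^{T} *_M \mathcal{X} = \mathcal{V} *_N (\mathcal{S}^{T} *_M \mathcal{S}) *_N \mathcal{V}^{T}$, and the same slice-wise argument, now using unitarity of $\mathcal{V}$, identifies the right singular tensors $\mathcal{V}_{: \ldots : j_1 \ldots j_N}$ as eigen-tensors of $\mathcal{X}^{T} *_M \mathcal{X}$.

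The only step requiring genuine care — and the main obstacle — is confirming that $\mathcal{S} *_N \mathcal{S}^{T}$ and $\mathcal{S}^{T} *_M \mathcal{S}$ are diagonal with diagonal entries equal to the squared singular values $d_{k_1 \ldots k_N}^{2}$. This is trivial for matrices but here requires unwinding the two-case description of $\mathcal{S}$ in the E-SVD theorem: in the rectangular case $N < M$, $\mathcal{S}$ is supported only where the trailing indices $(i_{N+1},\ldots,i_M)$ equal $(1,\ldots,1)$, so the contraction pairs each singular value with itself and annihilates every cross term, leaving a diagonal tensor whose nonzero entries are exactly the $d_{k_1 \ldots k_N}^{2}$ (with zeros in the extra slots, mirroring the null spectrum of $XX^{T}$ beyond the rank). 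Once this diagonal structure, together with the prescribed ordering of the $d_{k_1 \ldots k_N}$, is verified, both the eigenvalue claim and the eigen-tensor identification follow immediately from the displayed spectral forms.
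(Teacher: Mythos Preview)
Your argument is correct and is precisely the ``straightforward'' verification the paper alludes to (the paper gives no further details beyond that word): substitute the E-SVD, collapse the inner unitary factor, and read off the spectral decomposition. Your extra care with the diagonal structure of $\mathcal{S} *_N \mathcal{S}^{T}$ in the rectangular case $N<M$ is warranted and correctly handled.
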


\medskip
The proof is straightforward.

To simplify matters, we'll denote the $d$ eigen-tensors of a tensor, associated with the smallest eigenvalues, as the smallest $d$ eigen-tensors. Similarly, we will apply the same principle to the largest $d$ eigen-tensors. This terminology also extends to the left or right singular tensors.

\medskip
\begin{remark}
To generalize the notion of left and right inverse for a non-square tensors. It is called left or right $\Psi$-invertible if $\Psi(\mathcal{A})$ is left or right invertible, respectively. In case of confusion, we will denote $\Psi$ by $\Psi_j$ to represent the transformation of tensor $\mathcal{A} \in \mathbb{R}^{I_1 \times \ldots \times I_N}$ to a matrix $ \mathbb{R}^{(\prod_{k=1}^j I_k) \times ((\prod_{k=j+1}^N I_k)}$.
\end{remark}

\medskip
\begin{proposition}\label{prop:symmetric_XMXt}
\begin{enumerate}[label=\arabic*.]
\item A square $2N$-th order symmetric $\mathcal{X}$ is positive semi-definite tensor, definite tensor, respectively, if and only if there is a tensor, an invertible tensor, respectively, $\mathcal{B}$ of same size such that $\mathcal{X}=\mathcal{B} *_N \mathcal{B}^\top$.
\item Let a tensor $\mathcal{X} \in \mathbb{R}^{I_1 \times \ldots \times I_N}$ of order N, with its transpose in $ \mathbb{R}^{I_{N-j+1} \times \ldots \times I_N \times I_1 \times \ldots I_{N-j}}$ then $\mathcal{X} *_j \mathcal{X}^\top$ is positive semi-definite for any $1 \leq j \leq N$.\\
Let a tensor $\mathcal{X} \in \mathbb{R}^{I_1 \times \ldots \times I_N}$ of order N, let $1 \leq j \leq N$ such that the tensor is $\Psi_j-$ invertible, with its transpose in $ \mathbb{R}^{I_{N-j+1} \times \ldots \times I_N \times I_1 \times \ldots I_{N-j}}$, then $\mathcal{X} *_j \mathcal{X}^\top$ is positive definite.
\item The eigenvalues of a square symmetric tensor are real.
\item Let a symmetric matrix $M \in \mathbb{R}^{K \times K}$ and $\mathcal{X} \in \mathbb{R}^{I_1 \times \ldots \times I_N \times K}$, with its transpose in $ \mathbb{R}^{K \times I_1 \times \ldots \times I_N}$ then $ \mathcal{X} \times_{N+1} M *_1 \mathcal{X}^\top$ is a symmetric.
\item If $M$ is positive semi-definite, then $\mathcal{X} \times_{N+1} M *_1 \mathcal{X}^\top$ is positive semi-definite.
\item If $M$ is positive definite, and $\mathcal{X}$ is $\Psi-$invertible, then $\mathcal{X} \times_{N+1} M *_1 \mathcal{X}^\top$ is positive definite.
\end{enumerate}
\end{proposition}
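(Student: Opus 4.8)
The unifying idea is that each of the six assertions is the $\Psi$-image of a classical matrix fact, so the plan is to transport every claim to $\mathbb{R}^{P\times Q}$ through the unfolding isomorphism and then cite the corresponding statement of linear algebra. Two compatibility facts underpin everything, and I would record them first. (a) The unfolding intertwines transposition, $\Psi(\mathcal{A}^T)=\Psi(\mathcal{A})^T$, which is immediate by comparing the entrywise definitions of the tensor transpose and of the matricization. (b) Writing $\mathbf{x}=\Psi_N(\mathcal{X})$ for the vectorization obtained from $\operatorname{ivec}$, one has $\langle\mathcal{X},\mathcal{A}*_N\mathcal{X}\rangle=\mathbf{x}^T\Psi(\mathcal{A})\mathbf{x}$ and $\Psi_N(\mathcal{A}*_N\mathcal{X})=\Psi(\mathcal{A})\mathbf{x}$. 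Fact (b) shows that a symmetric tensor $\mathcal{A}$ is positive (semi-)definite exactly when the symmetric matrix $\Psi(\mathcal{A})$ is, and that $\mathcal{A}*_N\mathcal{X}=\lambda\mathcal{X}$ is equivalent to $\Psi(\mathcal{A})\mathbf{x}=\lambda\mathbf{x}$; invertibility transfers because $\Psi$ is a ring isomorphism on square tensors (Proposition \ref{prop:automorphisme}).

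With these in hand the first three parts are short. For part 1, apply $\Psi$ to $\mathcal{X}=\mathcal{B}*_N\mathcal{B}^T$ and combine (a) with Proposition \ref{prop:automorphisme} to get $\Psi(\mathcal{X})=\Psi(\mathcal{B})\Psi(\mathcal{B})^T$; the classical characterization that a symmetric matrix is PSD (resp. PD) iff it factors as $BB^T$ with $B$ arbitrary (resp. invertible) then supplies both directions, and invertibility of $\mathcal{B}$ matches that of $\Psi(\mathcal{B})$. Part 2 is the special case $\mathcal{B}=\mathcal{X}$: choosing the split that sends the $*_j$ contraction to an ordinary product, $\Psi$ maps $\mathcal{X}*_j\mathcal{X}^T$ to $AA^T$ for the relevant unfolding $A$ of $\mathcal{X}$, and $AA^T$ is always PSD, being PD precisely when $A$ has full row rank, i.e. when $\mathcal{X}$ is $\Psi$-invertible in the matching sense. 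Part 3 follows either from (b) (a symmetric tensor yields a symmetric matrix, whose eigenvalues are real) or directly from the Einstein spectral theorem (Theorem \ref{thm:Spectral}), whose diagonal tensor carries the real eigenvalues.

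Parts 4--6 are the hybrid ones, mixing the $m$-mode product with the Einstein product, and here the key preliminary step is to identify $\times_{N+1}M$ with a matrix multiplication under the unfolding $\Psi_N$. Setting $A=\Psi_N(\mathcal{X})\in\mathbb{R}^{(\prod_k I_k)\times K}$, a direct entrywise check against the two product definitions gives $\Psi_N(\mathcal{X}\times_{N+1}M)=AM^T$ and $\Psi_1(\mathcal{X}^T)=A^T$, whence $\Psi_N\big((\mathcal{X}\times_{N+1}M)*_1\mathcal{X}^T\big)=AM^TA^T$. For part 4, symmetry of $M$ yields $M^T=M$, so this matrix equals $AMA^T$, which is manifestly symmetric; transporting back with (a) returns the symmetric tensor. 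Parts 5 and 6 reduce to the standard facts that $AMA^T$ is PSD whenever $M$ is PSD, and PD whenever $M$ is PD and $A^T$ is injective (i.e. $A$ has full row rank), the latter being exactly the $\Psi$-invertibility hypothesis; applying (b) in reverse converts these matrix (semi-)definiteness statements into the claimed tensor ones.

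The downstream linear algebra is routine once everything is matricized; the one place demanding genuine care is the bookkeeping of modes and transpose conventions. In part 2 the contraction $*_j$ must be paired with the split that places the contracted $j$ modes entirely into the column block, together with the mode-reordered transpose $\mathcal{X}^T\in\mathbb{R}^{I_{N-j+1}\times\cdots\times I_N\times I_1\times\cdots\times I_{N-j}}$, so that the Einstein contraction really collapses to $AA^T$; likewise, in parts 4--6 one must confirm that $\times_{N+1}M$ matricizes to right multiplication by $M^T$ (and not $M$), so that the symmetry of $M$ is what is actually invoked. I expect establishing facts (a), (b) and the $\times_{N+1}M\leftrightarrow(\cdot)M^T$ correspondence, with the mode indices tracked correctly, to be the only real obstacle; everything after that is a direct appeal to the matrix case.
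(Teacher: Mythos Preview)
Your proposal is correct, but it takes a different route from the paper. You systematically push every item through the unfolding isomorphism $\Psi$ and quote the corresponding matrix fact, which is clean and uniform. The paper, by contrast, invokes $\Psi$ only for part~1 and handles the remaining items \emph{intrinsically} in tensor language: part~2 is the one-line computation $\mathcal{Y}^T*_{N-j}\mathcal{X}*_j\mathcal{X}^T*_{N-j}\mathcal{Y}=\|\mathcal{X}^T*_{N-j}\mathcal{Y}\|_F^2\geq 0$; part~4 is the transpose identity $(\mathcal{X}\times_{N+1}M*_1\mathcal{X}^T)^T=\mathcal{X}\times_{N+1}M^T*_1\mathcal{X}^T$; and parts~5--6 factor $M=BB^T$ and rewrite the tensor as $\widehat{\mathcal{X}}*_1\widehat{\mathcal{X}}^T$ with $\widehat{\mathcal{X}}=\mathcal{X}\times_{N+1}B$, thereby reducing to part~2. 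Your approach buys uniformity and offloads all the work onto well-known matrix statements, at the cost of the mode-bookkeeping you correctly flag (especially the $\times_{N+1}M\leftrightarrow(\cdot)M^T$ correspondence). The paper's approach avoids that bookkeeping and stays in the Einstein calculus, but requires separate ad hoc arguments for each item. Both are short; either is acceptable.
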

\medskip
\begin{proof} 
The proof of the first one is straightforward using \ref{prop:automorphisme}.\\
Let $\mathcal{Y} \in \mathbb{R}^{ I_1 \times \ldots I_{N-j}}$, then $\mathcal{Y}^\top *_{N-j} \mathcal{X} *_j \mathcal{X}^\top *_{N-j} \mathcal{Y}=\|\mathcal{X}^\top *_{N-j} \mathcal{Y}\|_F^2 \geq 0$.\\
The proof of the third is similar to the second one.\\
Let $\mathcal{A} \in \mathbb{R}^{I_1 \times \ldots \times I_N \times I_1 \times \ldots \times I_N}$ be a symmetric tensor and non-zero tensor $\mathcal{X} \in \mathbb{R}^{I_1 \times \ldots \times I_N}$ with $\mathcal{A} *_N \mathcal{X}=\lambda \mathcal{X}$, then
$\lambda^\top \mathcal{X}^\top =(\lambda \mathcal{X})^\top=(\mathcal{A} *_N \mathcal{X})^\top= \mathcal{X}^\top *_N \mathcal{A}^\top = \mathcal{X}^\top *_N\mathcal{A}=\lambda^\top \mathcal{X}^\top$, then $\lambda=\lambda^\top$, which completes the proof.\\
We have $\left( \mathcal{X} \times_{N+1} M *_1 \mathcal{X}^\top\right)^\top=(M \times_1 \mathcal{X}^\top)^\top *_1 \mathcal{X}^\top =(\mathcal{X} \times_{N+1} M^\top) *_1 \mathcal{X}^\top$, then conclude by the symmetry of $M$.\\
Let $M$ be a positive semi-definite matrix, then there exist a matrix $B$ such that $M=BB^\top$, then \[ 
\mathcal{X} \times_{N+1} M *_1 \mathcal{X}^\top =\mathcal{X} \times_{N+1} B B^\top *_1 \mathcal{X}^\top =\widehat{\mathcal{X}} *_1 \widehat{\mathcal{X}}^\top, \]
with $\widehat{\mathcal{X}}=\mathcal{X} \times_{N+1} B$, then the result follows.\\
The last proposition has a similar proof; Using the fact that $M$ is positive definite, then $B$ is invertible, and $\mathcal{X}$ is invertible, then $\widehat{\mathcal{X}}$ is $\Psi-$invertible, and the result follows.
\end{proof}

We present a property that relates the tensor generalized eigenvalue problem with the tensor eigenvalue problem.
\medskip
\begin{proposition}
Let the generalized eigenvalue problem $\mathcal{A} *_{N} \mathcal{X}=\lambda \mathcal{M} *_{N} \mathcal{X}$, with $\mathcal{A}, \mathcal{M}$ are a square $2N$-th order tensor, with $\mathcal{M}$ being invertible, then
$\widehat{\mathcal{X}}= \mathcal{M} *_{N} \mathcal{X}$ is a solution of the tensor eigen-problem
$\widehat{\mathcal{A}} *_{N} \widehat{\mathcal{X}}=\lambda \widehat{\mathcal{X}}$ with $\widehat{\mathcal{A}}=\mathcal{A} *_N \mathcal{M}^{-1}$.
\end{proposition}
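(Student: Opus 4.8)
The plan is to prove the statement by a direct algebraic substitution, relying only on the associativity of the Einstein product and on the defining property of the inverse tensor. First I would record that $*_N$ is associative: this follows from Proposition~\ref{prop:automorphisme}, since $\Psi$ is a linear isomorphism that carries $*_N$ to the ordinary (associative) matrix product, so associativity transfers back through $\Psi^{-1}$. This is what allows the triple products below to be regrouped freely.

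Next I would check that $\widehat{\mathcal{X}} = \mathcal{M} *_N \mathcal{X}$ is a genuine (non-null) eigen-tensor, which is needed for the conclusion to be meaningful. Since $\mathcal{X}$ is an eigen-tensor of the pair $\{\mathcal{A}, \mathcal{M}\}$ it is non-null, and if $\widehat{\mathcal{X}}$ were zero then applying $\mathcal{M}^{-1}$ on the left would give $\mathcal{X} = \mathcal{M}^{-1} *_N \widehat{\mathcal{X}} = 0$, a contradiction; hence $\widehat{\mathcal{X}} \neq 0$.

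The core computation is then
\begin{equation*}
\widehat{\mathcal{A}} *_N \widehat{\mathcal{X}} = (\mathcal{A} *_N \mathcal{M}^{-1}) *_N (\mathcal{M} *_N \mathcal{X}) = \mathcal{A} *_N (\mathcal{M}^{-1} *_N \mathcal{M}) *_N \mathcal{X} = \mathcal{A} *_N \mathcal{X},
\end{equation*}
where the middle equality uses associativity and the last uses $\mathcal{M}^{-1} *_N \mathcal{M} = \mathcal{I}_N$. Invoking the original generalized eigen-equation $\mathcal{A} *_N \mathcal{X} = \lambda\, \mathcal{M} *_N \mathcal{X} = \lambda \widehat{\mathcal{X}}$ then yields $\widehat{\mathcal{A}} *_N \widehat{\mathcal{X}} = \lambda \widehat{\mathcal{X}}$, which is exactly the claimed eigen-relation.

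There is essentially no hard step here; the argument is entirely formal. The only point requiring care — and the closest thing to an obstacle — is making the associativity of $*_N$ explicit, since it is used silently when regrouping the triple product. Once associativity is justified through the morphism property of $\Psi$, together with the non-triviality of $\widehat{\mathcal{X}}$, the remaining manipulations are immediate.
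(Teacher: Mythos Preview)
Your argument is correct: the direct substitution using associativity of $*_N$ and $\mathcal{M}^{-1}*_N\mathcal{M}=\mathcal{I}_N$ immediately yields the claim, and your check that $\widehat{\mathcal{X}}\neq 0$ is a nice touch. The paper in fact states this proposition without proof, treating it as immediate, so your write-up simply supplies the routine verification that the paper omits.
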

\medskip
\begin{theorem}\label{thm:tr_pos_def}
Let a symmetric $\mathcal{X} \in \mathbb{R}^{I_1 \times \ldots \times I_M \times I_1 \times \ldots \times I_M}$, and $\mathcal{B}$ a positive definite tensor of same size, then
\[\min_{\substack{\mathcal{P} \in \mathbb{R}^{I_1 \times \ldots \times I_M \times d}\\ \mathcal{P}^\top *_M \mathcal{B} *_M \mathcal{P}=\mathcal{I}}} \operatorname{Tr}(\mathcal{P}^\top *_M \mathcal{X} *_M \mathcal{P}), \] is equivalent to solve the generalized eigenvalue problem $\mathcal{X} *_M \mathcal{P}=\lambda \mathcal{B} *_M \mathcal{P}$. 
\end{theorem}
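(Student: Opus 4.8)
The plan is to transport the entire problem to the matrix setting through the unfolding isomorphism $\Psi$ and then invoke the classical trace-minimization theorem. Write $X=\Psi(\mathcal{X})$, $B=\Psi(\mathcal{B})$ and $P=\Psi(\mathcal{P})$, so that $X,B\in\mathbb{R}^{|\mathbf{I}|\times|\mathbf{I}|}$ with $|\mathbf{I}|=\prod_{k=1}^{M}I_k$ and $P\in\mathbb{R}^{|\mathbf{I}|\times d}$. Using that $\Psi$ is a multiplicative morphism for the Einstein product (Proposition~\ref{prop:automorphisme}) together with $\Psi(\mathcal{P}^T)=\Psi(\mathcal{P})^T$, the objective and the constraint unfold to $\operatorname{Tr}(\mathcal{P}^T *_M \mathcal{X} *_M \mathcal{P})=\operatorname{Tr}(P^T X P)$ and $\mathcal{P}^T *_M \mathcal{B} *_M \mathcal{P}=\mathcal{I}\Leftrightarrow P^T B P=I_d$. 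Since $\Psi$ is a bijection, the tensor problem and the matrix problem $\min_{P^T B P=I_d}\operatorname{Tr}(P^T X P)$ have identical feasible sets and objective values, so it suffices to solve the latter; here $X$ is symmetric and $B$ is symmetric positive definite, inherited from $\mathcal{X},\mathcal{B}$ via $\Psi$.

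First I would remove the generalized constraint. Because $B$ is positive definite it admits a symmetric positive-definite square root $B^{1/2}$; substituting $Q=B^{1/2}P$ turns the constraint into ordinary orthogonality $Q^T Q=I_d$ and rewrites the objective as $\operatorname{Tr}(Q^T\widetilde{X}Q)$ with $\widetilde{X}=B^{-1/2}X B^{-1/2}$ symmetric. The problem $\min_{Q^T Q=I_d}\operatorname{Tr}(Q^T\widetilde{X}Q)$ is the standard Ky~Fan trace-minimization: its minimum equals the sum of the $d$ smallest eigenvalues of $\widetilde{X}$ and is attained exactly when the columns of $Q$ form an orthonormal basis of the invariant subspace spanned by the corresponding $d$ eigenvectors. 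I would establish this either by the Courant--Fischer characterization or by a Lagrange-multiplier computation whose stationarity condition is $\widetilde{X}Q=Q\Lambda$ with $\Lambda$ diagonal.

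It remains to translate the eigenproblem back. If $\widetilde{X}q=\lambda q$, then undoing the substitution with $p=B^{-1/2}q$ gives $Xp=\lambda Bp$; hence the eigenvalues of $\widetilde{X}$ are precisely the generalized eigenvalues of the pencil $(X,B)$, and the optimal columns $p_i=B^{-1/2}q_i$ solve $Xp_i=\lambda_i Bp_i$. Applying $\Psi^{-1}$ and again Proposition~\ref{prop:automorphisme} recasts this matrix generalized eigenproblem as $\mathcal{X} *_M \mathcal{P}=\lambda\,\mathcal{B} *_M \mathcal{P}$, which is the asserted equivalence; the minimizing $\mathcal{P}$ is obtained by collecting the $d$ smallest generalized eigen-tensors.

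The main obstacle I anticipate is not the algebra of $\Psi$ — that part is routine once $\Psi(\mathcal{P}^T)=\Psi(\mathcal{P})^T$ and trace-preservation under $\Psi$ are noted — but rather pinning down the matrix trace-minimization step rigorously, namely proving that the infimum over the (non-convex) generalized Stiefel manifold is both attained and equal to the sum of the $d$ smallest generalized eigenvalues. The cleanest route is the square-root reduction above combined with Ky~Fan's theorem; one must also record that the existence of $B^{1/2}$ (equivalently, of $\mathcal{B}^{1/2}$) follows from the spectral decomposition of a symmetric positive-definite tensor guaranteed by Theorem~\ref{thm:Spectral}.
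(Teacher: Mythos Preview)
Your proof is correct and coincides with the paper's primary argument: transport the problem through the unfolding isomorphism $\Psi$ to the matrix generalized trace-minimization, solve there, and pull back via $\Psi^{-1}$. The paper asserts the matrix step without detail (where you supply the $B^{1/2}$ reduction and Ky~Fan), and it additionally offers a second, self-contained proof that computes the Lagrangian and KKT conditions directly at the tensor level, diagonalizing the symmetric multiplier to reach $\mathcal{X}*_M\widehat{\mathcal{P}}=\mathcal{B}*_M\widehat{\mathcal{P}}*_M\mathcal{D}$.
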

\begin{proof}
Since $\Psi$ is an isomorphism, the problem is equivalent to minimize $\operatorname{Tr}(PXP^\top )$ with $P^\top B P=I$, 
$\Psi(\mathcal{P})=P, \Psi(\mathcal{X})=X, \Psi(\mathcal{B})=B$. We have $X$ symmetric and $B$ is positive definite. The solution of the equivalent problem is the $d$ smallest eigenvalues of $X$, using the fact $\Psi^{-1}$ is an isomorphism, we obtain the result.\\
A second proof without using the isomorphism property is the following.\\
Let the Lagrangian of the problem be
\begin{equation*}
\mathcal{L}(\mathcal{P},\Lambda):=\operatorname{Tr}(\mathcal{P}^\top *_M \mathcal{X} *_M \mathcal{P})-\operatorname{Tr}(\Lambda^\top *_M (\mathcal{P}^\top *_M \mathcal{B} *_M \mathcal{P}-\mathcal{I})),
\end{equation*}
with $\Lambda \in \mathbb{R}^{d \times d}$ the Lagrange multiplier. Using KKT conditions, we have
\begin{equation*}
\begin{aligned}
\dfrac{\partial \mathcal{L}}{\partial \mathcal{P}}&=2 \mathcal{X} *_M \mathcal{P}+2 \Lambda *_M \mathcal{P}=0\\
\implies &\mathcal{P}^\top *_M \mathcal{B} *_M \mathcal{P}-\mathcal{I}=0.
\end{aligned}
\end{equation*}
To compute the partial derivative with respect to $\mathcal{P}$, we introduce the functions $f_1(\mathcal{P})$ and $f_2(\mathcal{P})$, defined as follows
\[ f_1(\mathcal{P}) = \operatorname{Tr}(\mathcal{P}^\top *_M \mathcal{X} *_M \mathcal{P}), \]
\[ f_2(\mathcal{P}) = \operatorname{Tr}\left(\Lambda^\top *_M (\mathcal{P}^\top *_M \mathcal{B} *_M \mathcal{P} - \mathcal{I})\right). \]
Subsequently, we aim to determine the partial derivative.
\begin{equation*}
\begin{aligned}
f_1(\mathcal{P}+\varepsilon \mathcal{H})&=\operatorname{Tr}\left( \left(\mathcal{P}+\varepsilon \mathcal{H})^\top *_M \mathcal{X} *_M (\mathcal{P}+\varepsilon \mathcal{H}\right)\right)\\
&=\operatorname{Tr}\left(\mathcal{P}^\top *_M \mathcal{X} *_M \mathcal{P}+\varepsilon \mathcal{H}^\top *_M \mathcal{X} *_M \mathcal{P}+\varepsilon \mathcal{P}^\top *_M \mathcal{X} *_M \mathcal{H}\right)\\
&\qquad +\operatorname{Tr}(\varepsilon^2 \mathcal{H}^\top *_M \mathcal{X} *_M \mathcal{H})\\
&=\operatorname{Tr}(\mathcal{P}^\top *_M \mathcal{X} *_M \mathcal{P})+\varepsilon \operatorname{Tr}(\mathcal{H}^\top *_M \mathcal{X} *_M \mathcal{P})\\
&\qquad +\varepsilon \operatorname{Tr}(\mathcal{P}^\top *_M \mathcal{X} *_M \mathcal{H})+\varepsilon^2 \operatorname{Tr}(\mathcal{H}^\top *_M \mathcal{X} *_M \mathcal{H})\\
&=f_1(\mathcal{P})+\varepsilon \left[ \operatorname{Tr}(\mathcal{H}^\top *_M \mathcal{X} *_M \mathcal{P})+ \operatorname{Tr}(\mathcal{P}^\top *_M \mathcal{X} *_M \mathcal{H}) \right] +o(\varepsilon)\\
&=f_1(\mathcal{P})+\varepsilon \operatorname{Tr}\left(\mathcal{H}^\top *_M (\mathcal{X} +\mathcal{X}^\top )*_M \mathcal{P}\right) +o(\varepsilon).\\
\end{aligned}
\end{equation*}

Then, as $\mathcal{X}$ is symmetric, the partial derivative in the direction $\mathcal{H}$ is
\begin{equation*}
\begin{aligned}
\dfrac{\partial f_1}{\partial \mathcal{P}}(\mathcal{H})&=\lim_{\varepsilon \rightarrow 0} \dfrac{f_1(\mathcal{P}+\varepsilon \mathcal{H})-f_1(\mathcal{P})}{\varepsilon}\\
&=2\operatorname{Tr}(\mathcal{H}^\top *_M \mathcal{X}*_M \mathcal{P}).
\end{aligned}
\end{equation*}
It gives us the partial derivative of $f_1$ with respect to $\mathcal{P}$ as
\begin{equation*}
\dfrac{\partial f_1}{\partial \mathcal{P}}=2\mathcal{X}*_M \mathcal{P}.
\end{equation*}
For the second function, we have
\begin{equation*}
\begin{aligned}
f_2(\mathcal{P}+\varepsilon \mathcal{H})&=\operatorname{Tr}\left(\Lambda^\top *_M \left((\mathcal{P}+\varepsilon \mathcal{H})^\top *_M \mathcal{B} *_M (\mathcal{P}+\varepsilon \mathcal{H})-\mathcal{I}\right)\right)\\
&=\operatorname{Tr}\left(\Lambda^\top *_M (\mathcal{P}^\top *_M \mathcal{B} *_M \mathcal{P}-\mathcal{I}))++\varepsilon^2 \operatorname{Tr}(\Lambda^\top *_M (\mathcal{H}^\top *_M \mathcal{B} *_M \mathcal{H})\right)\\
+& \varepsilon \operatorname{Tr}\left(\Lambda^\top *_M (\mathcal{H}^\top *_M \mathcal{B} *_M \mathcal{P}+\mathcal{P}^\top *_M \mathcal{B} *_M \mathcal{H})\right)\\
&=f_2(\mathcal{P})+\varepsilon \operatorname{Tr}\left(\Lambda^\top *_M (\mathcal{H}^\top *_M \mathcal{B} *_M \mathcal{P}+\mathcal{P}^\top *_M \mathcal{B} *_M \mathcal{H})\right)+ O(\varepsilon)\\
&=f_2(\mathcal{P})+\varepsilon \operatorname{Tr}\left(\mathcal{H}^\top *_M \left[ \mathcal{B} *_M \mathcal{P} *_M \Lambda^\top +\mathcal{B}^\top *_M \mathcal{P} *_M \Lambda\right]\right)+ O(\varepsilon).\\
\end{aligned}
\end{equation*}
We used the cyclic property \ref{pro:cyclic_trace} and the transpose with trace property \ref{prop:trace_conjuguate}
 in the last equality. Then, as $\mathcal{B}$ is symmetric, the partial derivative in the direction $\mathcal{H}$ is
\begin{equation*}
\begin{aligned}
\dfrac{\partial f_2}{\partial \mathcal{P}}(\mathcal{H})&=\lim_{\varepsilon \rightarrow 0} \dfrac{f_2(\mathcal{P}+\varepsilon \mathcal{H})-f_2(\mathcal{P})}{\varepsilon}\\
&=\operatorname{Tr}\left(\mathcal{H}^\top *_M \mathcal{B} *_M \mathcal{P} *_M (\Lambda +\Lambda^\top )\right).
\end{aligned}
\end{equation*}
This yields the partial derivative of $\mathcal{L}$ with respect to $\mathcal{P}$ as follows
\begin{equation*}
\dfrac{\partial f_2}{\partial \mathcal{P}}=\mathcal{B} *_M \mathcal{P} *_M (\Lambda +\Lambda^\top ).
\end{equation*}
Subsequently,
\begin{equation*}
\begin{aligned}
\dfrac{\partial \mathcal{L}}{\partial \mathcal{P}}=0 & \iff 2 \mathcal{X} *_M \mathcal{P}- \Lambda *_M \mathcal{P} -\Lambda^\top *_M \mathcal{P}=0\\ 
& \iff \mathcal{X} *_M \mathcal{P}= \mathcal{B} *_M \mathcal{P} *_M (\Lambda +\Lambda^\top )\\ 
&\iff \mathcal{X} *_M \mathcal{P}= \mathcal{B} *_M \mathcal{P} *_M \widehat{\Lambda}\\
&\iff \mathcal{X} *_M \mathcal{P} = \mathcal{B} *_M \mathcal{P} *_M \mathcal{Q}^\top *_M \mathcal{D} *_M \mathcal{Q} \\
&\iff \mathcal{X} *_M \mathcal{P} *_M \mathcal{Q}^\top = \mathcal{B} *_M \mathcal{P} *_M \mathcal{Q}^\top *_M \mathcal{D}\\
&\iff \mathcal{X} *_M \widehat{\mathcal{P}}= \mathcal{B} *_M \widehat{\mathcal{P}} *_M \mathcal{D}.
\end{aligned}
\end{equation*}
The third line utilizes the property that $\widehat{\Lambda} = \Lambda + \Lambda^\top$, which is symmetric, thus diagonalizable (\ref{thm:Spectral}), i.e., $\widehat{\Lambda} = \mathcal{Q}^\top *_M \mathcal{D} *_M \mathcal{Q}$. The last two lines are justified by the fact that $\widehat{\mathcal{P}} = \mathcal{P} *_M \mathcal{Q}$ also satisfies $\widehat{\mathcal{P}}^\top *_M \mathcal{B} *_M \widehat{\mathcal{P}} = \mathcal{I}$, concluding the proof.
\end{proof}

\medskip
A corollary of this theorem can be deduced.
\medskip
\begin{corollary}\label{thm:Tr}
Let $\mathcal{X} \in \mathbb{R}^{I_1 \times \ldots \times I_M \times K_1 \times \ldots \times K_N}$, the solution of \[\arg \min_{\substack{\mathcal{P} \in \mathbb{R}^{I_1 \times \ldots \times I_M \times d}\\ \mathcal{P}^\top 
*_M \mathcal{P}=\mathcal{I}}} \|\mathcal{P}^\top *_M \mathcal{X}\|_F^2. \]
is the $d$ smallest left singular tensors of $\mathcal{X}$.
\end{corollary}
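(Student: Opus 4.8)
The plan is to reduce the stated minimization to an instance of Theorem~\ref{thm:tr_pos_def} and then translate the resulting eigen-problem into the language of the E-SVD. First I would rewrite the Frobenius objective as a trace. Setting $\mathcal{A}=\mathcal{P}^T *_M \mathcal{X} \in \mathbb{R}^{d \times K_1 \times \ldots \times K_N}$, the inner-product/trace identity \eqref{prop:trace_conjuguate} gives $\|\mathcal{A}\|_F^2=\langle\mathcal{A},\mathcal{A}\rangle=\operatorname{Tr}(\mathcal{A} *_N \mathcal{A}^T)$, while the transpose rule \eqref{eq:Einstein_transpose} yields $(\mathcal{P}^T *_M \mathcal{X})^T=\mathcal{X}^T *_M \mathcal{P}$. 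Combining these and using associativity of the Einstein product, the objective becomes
\begin{equation*}
\|\mathcal{P}^T *_M \mathcal{X}\|_F^2 = \operatorname{Tr}\!\big((\mathcal{P}^T *_M \mathcal{X}) *_N (\mathcal{P}^T *_M \mathcal{X})^T\big) = \operatorname{Tr}\!\big(\mathcal{P}^T *_M (\mathcal{X} *_N \mathcal{X}^T) *_M \mathcal{P}\big).
\end{equation*}
I abbreviate $\mathcal{G}=\mathcal{X} *_N \mathcal{X}^T$, a square $2M$-order tensor.

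Next I would record the structure of $\mathcal{G}$: by Proposition~\ref{prop:symmetric_XMXt} it is symmetric and positive semi-definite. The problem is therefore exactly of the form treated in Theorem~\ref{thm:tr_pos_def}, with $\mathcal{B}=\mathcal{I}$ (the identity tensor, positive definite) and $\mathcal{G}$ playing the role of the symmetric tensor, and with the constraint $\mathcal{P}^T *_M \mathcal{P}=\mathcal{I}$. Applying that theorem, the constrained minimizers are characterized by the eigen-problem $\mathcal{G} *_M \mathcal{P}=\mathcal{P} *_M \mathcal{D}$ with $\mathcal{D}$ diagonal; that is, the frontal slices of an optimal $\mathcal{P}$ are eigen-tensors of $\mathcal{G}$.

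Finally I would identify these eigen-tensors. By the proposition relating the E-SVD to the eigen-problems of $\mathcal{X} *_N \mathcal{X}^T$, the eigen-tensors of $\mathcal{G}$ are precisely the left singular tensors of $\mathcal{X}$, and the associated eigenvalues are the squared singular values. Since squaring is monotone on the nonnegative singular values, selecting the $d$ smallest eigenvalues of $\mathcal{G}$ selects the $d$ smallest singular values of $\mathcal{X}$, so the optimal $\mathcal{P}$ is built from the $d$ smallest left singular tensors, as claimed.

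The algebraic reduction to a trace is routine; the delicate point is that Theorem~\ref{thm:tr_pos_def} produces the eigen-equation as a first-order (KKT) condition, so to conclude that the \emph{global} minimum is attained specifically at the $d$ smallest eigen-tensors I must supply the ordering argument. The cleanest route is to pass through the isomorphism $\Psi$: under $\Psi_M$ the problem becomes the matrix trace-minimization $\min_{P^T P=I}\operatorname{Tr}(P^T \Psi(\mathcal{G}) P)$ with $\Psi(\mathcal{G})=\Psi(\mathcal{X})\Psi(\mathcal{X})^T$ symmetric positive semi-definite, whose global minimizer spans the $d$ smallest eigenvectors of $\Psi(\mathcal{G})$ by the classical Ky Fan (Courant--Fischer) theorem; applying $\Psi^{-1}$ then returns the $d$ smallest eigen-tensors. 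This optimality/ordering step, rather than the manipulations, is the main obstacle.
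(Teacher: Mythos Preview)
Your proposal is correct and follows essentially the same route as the paper: rewrite $\|\mathcal{P}^T *_M \mathcal{X}\|_F^2$ as $\operatorname{Tr}(\mathcal{P}^T *_M (\mathcal{X} *_N \mathcal{X}^T) *_M \mathcal{P})$, invoke Theorem~\ref{thm:tr_pos_def} with $\mathcal{B}=\mathcal{I}$, and identify the resulting eigen-tensors with the left singular tensors of $\mathcal{X}$. Your added care about the ordering step (via $\Psi$ and the Ky Fan argument) is a welcome clarification, but it is already implicit in the paper's first proof of Theorem~\ref{thm:tr_pos_def}, so the overall strategy is the same.
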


\begin{proof}
We have
$\|\mathcal{P}^\top *_M \mathcal{X}\|_F^2 =\operatorname{Tr} \left(\mathcal{P}^\top *_M \mathcal{X} *_N \mathcal{X}^\top *_M \mathcal{P}\right).$ Theorem~\ref{thm:tr_pos_def} tells that the solution is equivalent to solve $\mathcal{X} *_N \mathcal{X}^\top *_M \mathcal{P}=\lambda \mathcal{P} $, i.e., the $d$ smallest tensors of $\mathcal{X} *_N \mathcal{X}^\top$, which corresponds exactly to the $d$ smallest left singular tensors of $\mathcal{X}$.
\end{proof}

\section{Multidimensional reduction}\label{sec:DR:3D}
In this section, we present a generalized approach to DR methods using the Einstein product.

Given a tensor $\mathcal{X} \in \mathbb{R}^{I_1 \times \ldots \times I_M \times n}$, our objective is to derive a low-dimensional representation $\mathcal{Y} \in \mathbb{R}^{d \times n}$ of $\mathcal{X}$. This involves defining a mapping function $\Psi: \mathbb{R}^{I_1 \times \ldots \times I_M} \longrightarrow \mathbb{R}^{d}$.

First, we discuss the determination of the weight matrix, which can be computed in various ways, one common method is using the Gaussian kernel
\[ W_{i,j} = \exp\left(-\frac{\left\|\mathcal{X}^{(i)}-\mathcal{X}^{(j)}\right\|_{F}^{2}}{\sigma^{2}}\right). \]
Additionally, introducing a threshold can yield a sparse matrix (Gaussian-threshold). We also explore another method later in this section, which utilizes the reconstruction error.

Next, we introduce our proposed methods.\\

\textbf{Linear methods}: The linear methods can be written as $\mathcal{Y}=\mathcal{P}^\top *_M \mathcal{X}$. It is sufficient to find the projection matrix $\mathcal{P} \in \mathbb{R}^{I_1 \times \ldots \times I_M \times d}$.\\
Higher order PCA based on Einstein \cite{hachimi2023tensor} is the natural extension of PCA to higher order tensors. It extends the PCA applied to images, using the notion of eigenfaces to colored images that are modeled by a fourth-order tensor, using the Einstein product. It vectorizes pixels (height and width) for each color (RGB) to get a third-order tensor, then it computes E-SVD of this tensor centered, to get the eigenfaces.\\
The vectorization is not natural, since we omit the spatial information. The proposed work hides the vectorization in the first step by using the tensor directly, and seeks to find a solution of the following problem
\begin{equation}\label{eq:PCA_3D}
\begin{aligned}
\arg\max_{\substack{\mathcal{P} \in \mathbb{R}^{I_1 \times \ldots \times I_M \times d}\\ \mathcal{P}^\top 
*_M \mathcal{P}=\mathcal{I}\\ \mathcal{Y}=\mathcal{P}^\top *_M \mathcal{X}}} \Phi_{PCA}(\mathcal{Y})& :=\sum_{i}\left\|\mathcal{Y}^{(i)}-\dfrac{1}{n}\sum_{j} \mathcal{Y}^{(j)}\right\|_{F}^{2}.
\end{aligned}
\end{equation}
The objective function can be written as 
\begin{equation*}
\begin{aligned}
\Phi_{PCA}(\mathcal{Y})&=\sum_{i}\left\|\mathcal{P}^\top *_M (\mathcal{X}^{(i)}-\dfrac{1}{n}\sum_{j} \mathcal{X}^{(j)})\right\|_{F}^{2}\\
&=\sum_{i}\left\|\mathcal{P}^\top *_M (\mathcal{X}^{(i)}-\mathcal{Q}^{(i)})\right\|_{F}^{2}\\
&=\left\|\mathcal{P}^\top *_M (\mathcal{X}- \mathcal{Q})\right\|_{F}^{2},
\end{aligned}
\end{equation*}
with $ \mathcal{Q} \in \mathbb{R}^{I_1 \times \ldots \times I_M\times n} $, where $\mathcal{Q}^{(i)}=\frac{1}{n}\sum_{j} \mathcal{X}^{(j)}$ represents the mean.\\
The solution of \eqref{eq:PCA_3D} is the largest \(d\) left singular tensors of the centered tensor $ \mathcal{X}- \mathcal{Q}=\mathcal{X} \times_{M+1} H $.

Since the feature dimension is typically larger than the number of data points \( n \), computing the E-SVD of \( \mathcal{X} \times_{M+1} H \) can be computationally expensive. It's preferable to have a runtime that depends on \( n \) instead. To achieve this, we transform the equation $ \mathcal{X} \times_{M+1} H *_1 \mathcal{X}^\top *_M \mathcal{P}=\lambda \mathcal{P} $ to $ (\mathcal{X}^\top *_M \mathcal{X} \times_M H ) *_1 \mathbf{z}=\lambda \mathbf{z} $, with $ \mathbf{z}= \mathcal{X}^\top *_M \mathcal{P}$. This allows us to find the eigenvectors of a square matrix of size \( n \). The projected data \( \mathcal{Y} \) would be these vectors reshaped to the appropriate size.\\
The algorithm bellow shows the steps of PCA via the Einstein product.
\begin{algorithm}[H]
\caption{PCA-Einstein}
\label{algorithm_PCA}
\hspace*{\algorithmicindent} \textbf{Input:} $\mathcal{X}$ (Data) $d$(dimension output).\\
\hspace*{\algorithmicindent} \textbf{Output:} $\mathcal{P}$ (Projection space).
\begin{algorithmic}[1]
\State Compute $\mathcal{Q}$. \Comment{The mean tensor}
\State Compute the largest $d$ eigen-tensors of $\mathcal{Z} =\mathcal{X}-\mathcal{Q}$.
\State Combine these tensors to get $\mathcal{P}$.
\end{algorithmic}
\end{algorithm}

\subsection{Generalization of ONPP}
Given a weight matrix $W \in \mathbb{R}^{n \times n}$, the objective function is to resolve 
\begin{equation}\label{eq:ONPP_3D}
\begin{aligned}
\arg \min_{\substack{\mathcal{P} \in \mathbb{R}^{I_1 \times \ldots \times I_M \times d}\\ \mathcal{P}^\top 
*_M \mathcal{P}=\mathcal{I}\\ \mathcal{Y}=\mathcal{P}^\top *_M \mathcal{X}}} \Phi_{ONPP}(\mathcal{Y})& :=\sum_{i}\left\|\mathcal{Y}^{(i)}-\sum_{j} w_{i,j} \mathcal{Y}^{(j)}\right\|_{F}^{2}.
\end{aligned}
\end{equation}
The objective function can be written as
\begin{equation*}
\begin{aligned} 
\Phi_{ONPP}(\mathcal{Y})&=\sum_{i} \left\| \sum_j \delta_{i,j} \mathcal{Y}^{(j)} - \sum_j w_{i,j} \mathcal{Y}^{(j)} \right\|_{F}^{2}\\
&=\sum_{i} \left\| \sum_j (\delta_{i,j}- w_{i,j}) \mathcal{Y}^{(j)} \right\|_{F}^{2}\\
&=\sum_{i} \left\| \sum_j (I_n-W)_{i,j} \mathcal{Y}^{(j)} \right\|_{F}^{2}\\
&=\sum_{i} \left\| \left(\mathcal{Y} \times_{M+1} (I_n-W) \right)^{(i)} \right\|_{F}^{2}\\
&=\left\| \mathcal{Y} \times_{M+1} (I_n-W) \right\|_{F}^{2}\\
&=\left\|(\mathcal{P}^\top *_M \mathcal{X}) \times_{M+1} (I_n-W) \right\|_{F}^{2}\\
&=\left\|\mathcal{P}^\top *_M \left(\mathcal{X} \times_{M+1} \left(I_n-W \right)\right) \right\|_{F}^{2}.\\
\end{aligned}
\end{equation*}
Using corollary~\ref{thm:Tr}, the solution of \eqref{eq:ONPP_3D} is the smallest $d$ left singular tensors of $\mathcal{X} \times_{M+1} (I_n-W)$.\\
The algorithm bellow, shows the steps of ONPP via the Einstein product.
\begin{algorithm}[H]
\caption{ONPP-Einstein}
\label{algorithm_ONPP}
\hspace*{\algorithmicindent} \textbf{Input:} $\mathcal{X}$ (Data) $d$ (subspace dimension).\\
\hspace*{\algorithmicindent} \textbf{Output:} $\mathcal{P}$ (Projection space).
\begin{algorithmic}[1]
\State Compute $W$. \Comment{Using the appropriate method} 
\State Compute the smallest $d$ left singular tensors of $\mathcal{Z} =\mathcal{X} \times_{M+1} (I-W)$.
\State Combine these tensors to get $\mathcal{P}$.
\end{algorithmic}
\end{algorithm}

\subsubsection{Multi-weight ONPP}
In this section, we propose a generalization of ONPP, where multiple weights matrices are employed on the \(I_M\) mode. We denote by \( \mathcal{W} \in \mathbb{R}^{n \times n \times I_M} \) the weight tensor. Let \( \mathcal{Y}^{(i)}_r \) denotes the tensor $\mathcal{Y}$, by fixing its last two indices to $(r,i)$, i.e., \( \mathcal{Y}_{:,\ldots,r,i} \), similarly \( \mathcal{X}^{(i)}_r \) denotes \( \mathcal{X}_{:,\ldots,r,i} \). We assume that the \(r\)-th frontal slice of the weight tensor is constructed only from the \(r\)-th frontal slice of the data tensor.\\
The objective function is
\begin{equation}
\begin{aligned}
\arg \min_{\substack{\mathcal{P} \in \mathbb{R}^{I_1 \times \ldots \times I_M \times d}\\ \mathcal{P}^\top 
*_M \mathcal{P}=\mathcal{I} \\ \mathcal{Y}=\mathcal{P}^\top *_M \mathcal{X}}} \Phi_{ONPP_{MW}}(\mathcal{Y})& :=\sum_{i,r} \left\|\mathcal{Y}^{(i)}_r-\sum_{j} \mathcal{W}^{(r)}_{i,j} \mathcal{Y}^{(j)}_r\right\|_{F}^{2}.
\end{aligned}
\end{equation}

For each \(i\), utilizing the independence of the frontal slices \( \mathcal{Y}^{(i)}_r \), we can divide the objective function into \(I_M\) independent objective functions. The solution is obtained by concatenating the solutions of each objective function. The \(r\)-th objective function can be written as
\begin{equation}
\begin{aligned}
\arg \min_{\substack{\mathcal{P}_{:,\ldots,r,:} \in \mathbb{R}^{I_1 \times \ldots \times I_M \times d}\\\mathcal{P}_{:,\ldots,r,:}^\top *_M \mathcal{P}_{:,\ldots,r,:}=\mathcal{I} \\ \mathcal{Y}{:,\ldots,r,:}=\mathcal{P}{:,\ldots,r,:}^\top *_M \mathcal{X}{:,\ldots,r,:}}} & \sum_{i} \left\|\mathcal{Y}^{(i)}_r-\sum_{j} \mathcal{W}^{(r)}_{i,j} \mathcal{Y}^{(j)}_r\right\|_{F}^{2}.
\end{aligned}
\end{equation}
The solution of this objective function is the smallest \(d\) left singular tensors of\\ $\mathcal{X}_{:,\ldots,r,:} \times_{M+1} (I_n-\mathcal{W}^{(r)}) $. The solution of the original problem is obtained by concatenating the solutions of each objective function.

\subsection{Generalization of OLPP}
Given a weight matrix $W \in \mathbb{R}^{n \times n}$, the optimization problem to solve is
\begin{equation}
\begin{aligned}
\arg \min_{\substack{\mathcal{P} \in \mathbb{R}^{I_1 \times \ldots \times I_M \times d}\\ \mathcal{P}^\top 
*_M \mathcal{P}=\mathcal{I}}} \Phi_{OLPP}(\mathcal{Y})& :=\dfrac{1}{2} \sum_{i,j} w_{i,j} \left\|\mathcal{Y}^{(i)}-\mathcal{Y}^{(j)}\right\|_{F}^{2}.
\end{aligned}
\end{equation}
The objective function can be written as
\begin{equation*}
\begin{aligned}
\Phi_{OLPP}(\mathcal{Y})&= \dfrac{1}{2} \sum_{i,j} w_{i,j} \left\|\mathcal{Y}^{(i)}\right\|_{F}^{2} + w_{i,j} \left\|\mathcal{Y}^{(j)}\right\|_{F}^{2} - \langle \mathcal{Y}^{(i)}, \mathcal{Y}^{(j)}\rangle \\
&= \dfrac{1}{2} \sum_{i} d_i \left\|\mathcal{Y}^{(i)}\right\|_{F}^{2} + \dfrac{1}{2} \sum_{j} d_j \left\|\mathcal{Y}^{(j)}\right\|_{F}^{2} - \sum_{i,j} w_{i,j} \langle \mathcal{Y}^{(i)}, \mathcal{Y}^{(j)}\rangle \\
&=\sum_{i,j} d_i \left\|\mathcal{Y}^{(i)}\right\|_{F}^{2}- \sum_{i,j} w_{i,j} \langle \mathcal{Y}^{(i)}, \mathcal{Y}^{(j)}\rangle \\
&=\sum_{i,j} d_{i,j} \langle \mathcal{Y}^{(i)}, \mathcal{Y}^{(j)}\rangle- \sum_{i,j} w_{i,j} \langle \mathcal{Y}^{(i)}, \mathcal{Y}^{(j)}\rangle \\
&=\sum_{i,j} L_{i,j} \langle \mathcal{Y}^{(i)}, \mathcal{Y}^{(j)}\rangle\\
&= \langle \mathcal{Y} *_{M+1} L, \mathcal{Y}\rangle\\
&=\operatorname{Tr}\left(\mathcal{P}^\top *_M (\mathcal{X} \times_{M+1} L *_1 \mathcal{X}^\top ) *_M \mathcal{P}\right),
\end{aligned}
\end{equation*}
where $L$ is the Laplacian matrix corresponding to $W$.\\
The solution using Theorem ~\ref{thm:Tr}, is the smallest $d$ eigen-tensors of the symmetric tensor (Prop. \ref{prop:symmetric_XMXt}) $\mathcal{X} \times_{M+1} L *_1 \mathcal{X}^\top$.\\
The algorithm bellow, shows the steps of OLPP via the Einstein product.
\begin{algorithm}[H]
\caption{OLPP-Einstein}
\label{algorithm_OLPP}
\hspace*{\algorithmicindent} \textbf{Input:} $\mathcal{X}$ (Data) $d$ (subspace dimension).\\
\hspace*{\algorithmicindent} \textbf{Output:} $\mathcal{P}$ (Projection space).
\begin{algorithmic}[1]
\State Compute $L$. \Comment{Using the appropriate method} 
\State Compute the smallest $d$ eigen-tensors of $\mathcal{X} \times_{M+1} L *_1 \mathcal{X}^\top$.
\State Combine these tensors to get $\mathcal{P}$.
\end{algorithmic}
\end{algorithm}

\subsubsection{Multi-weight OLPP}
In this section, we propose a generalization of OLPP, where multiple weights are utilized on the \(I_M\) mode. The objective function is to solve
\begin{equation}
\begin{aligned}
\arg \min_{\substack{\mathcal{P} \in \mathbb{R}^{I_1 \times \ldots \times I_M \times d}\\ \mathcal{P}^\top 
*_M \mathcal{P}=\mathcal{I} \\ \mathcal{Y}=\mathcal{P}^\top *_M \mathcal{X}}} \Phi_{OLPP_{MW}}(\mathcal{Y})& :=\dfrac{1}{2} \sum_{i,j,r} \mathcal{W}^{(r)}_{i,j} \left\|\mathcal{Y}_r^{(i)}-\mathcal{Y}_r^{(j)}\right\|_{F}^{2}.
\end{aligned}
\end{equation}

Here, we have \(I_M\) independent objective functions, and the solution is obtained by concatenating the solutions of each objective function.

\subsection{Generalization of LPP}
LPP is akin to the Laplacian Eigenmap, serving as its linear counterpart. The objective function of LPP solves 
\begin{equation}
\begin{aligned}
\arg \min_{\substack{\mathcal{P} \in \mathbb{R}^{I_1 \times \ldots \times I_M \times d}\\ \mathcal{P}^\top *_M \mathcal{X} \times_{M+1} D *_1 \mathcal{X}^\top *_M \mathcal{P} =\mathcal{I}}} \Phi_{LPP}(\mathcal{Y}) & :=\dfrac{1}{2} \sum_{i,j} w_{i,j} \left\|\mathcal{Y}^{(i)}-\mathcal{Y}^{(j)}\right\|_{F}^{2}.
\end{aligned}
\end{equation}
The solution involves finding the smallest $d$ eigen-tensor of the generalized eigen-problem
\begin{equation}\label{eq:LPP_sol}
\mathcal{X} \times_{M+1} L *_1 \mathcal{X}^\top *_M \mathcal{V}=\lambda \mathcal{X} *_{M+1} D *_1 \mathcal{X} ^\top *_M \mathcal{V}.
\end{equation}
Using \ref{prop:symmetric_XMXt}, we deduce that the tensors $\mathcal{X} \times_{M+1} L *_1 \mathcal{X}^\top *_M$, $\mathcal{X} \times_{M+1} D *_1 \mathcal{X}^\top *_M$, are symmetric positive semi-definite. Here, we assume definiteness (although generally not true, especially if the number of sample points is less than the product of the feature dimensions). The projection tensor is obtained by adding these eigen-tensors and reshaping it to the desired dimension.\\
The algorithm bellow, shows the steps of LPP via the Einstein product.
\begin{algorithm}[H]
\caption{LPP-Einstein}
\label{algorithm_LPP}
\hspace*{\algorithmicindent} \textbf{Input:} $\mathcal{X}$ (Data) $d$ (subspace dimension).\\
\hspace*{\algorithmicindent} \textbf{Output:} $\mathcal{P}$ (Projection space).
\begin{algorithmic}[1]
\State Compute $L$. \Comment{Using the appropriate method} 
\State Compute the smallest $d$ eigen-tensors of $\mathcal{X} \times_{M+1} L *_1 \mathcal{X}^\top *_M \mathcal{V}=\lambda \mathcal{X}^\top *_{M+1} D *_1 \mathcal{X} ^\top *_M \mathcal{V}$. 
\State Combine these tensors to get $\mathcal{P}$.
\end{algorithmic}
\end{algorithm}

Similarly, the multi-weight LPP can be proposed.

\subsection{Generalization of NPP}
The generalization of NPP resembles that of ONPP, under the constraint of LLE, it aims to find
\begin{equation}
\begin{aligned}
\arg \min_{\substack{\mathcal{P} \in \mathbb{R}^{I_1 \times \ldots \times I_M \times d}\\ \mathcal{P}^\top *_M \mathcal{X} *_1 \mathcal{X}^\top *_M \mathcal{P}=\mathcal{I}}} \Phi_{NPP}(\mathcal{Y}) &:= \operatorname{Tr}\left(\mathcal{P}^\top *_M \mathcal{X} \times_{M+1} (I_n-W)(I_n-W)^\top \times_1 \mathcal{X}^\top *_M \mathcal{P}\right).
\end{aligned}
\end{equation}
The solution entails finding the smallest \( d \) eigenvectors of the generalized eigen-problem
\begin{equation*}
\mathcal{X} \times_{M+1} (I_n-W)(I_n-W)^\top \times_1 \mathcal{X}^\top *_M \mathcal{V} =\lambda \mathcal{X} *_1 \mathcal{X}^\top *_M \mathcal{V}.
\end{equation*}
The projection tensor is obtained by concatenating these eigenvectors into a matrix and then reshaping it to the desired dimension.\\
The algorithm bellow, shows the steps of NPP via the Einstein product.
\begin{algorithm}[H]\caption{NPP-Einstein}
\label{algorithm_NPP}
\hspace*{\algorithmicindent} \textbf{Input:} $\mathcal{X}$ (Data) $d$ (subspace dimension).\\
\hspace*{\algorithmicindent} \textbf{Output:} $\mathcal{P}$ (Projection space).
\begin{algorithmic}[1]
\State Compute $W$. \Comment{Using the appropriate method} 
\State Compute the smallest $d$ eigenvectors of $\mathcal{X} \times_{M+1} (I_n-W)(I_n-W)^\top \times_1 \mathcal{X}^\top *_M \mathcal{V} =\lambda \mathcal{X} *_1 \mathcal{X}^\top *_M \mathcal{V}$.
\State Combine these tensors to get $\mathcal{P}$.
\end{algorithmic}
\end{algorithm}
\noindent
Similarly, the multi-weight NPP can be proposed, and the solution is similar.\\

\textbf{Nonlinear methods}: Nonlinear DR methods are potent tools for uncovering the nonlinear structure within data. However, they present their own challenges, such as the absence of an inverse mapping, which is essential for data reconstruction. Another difficulty encountered is the out-of-sample extension, which involves extending the method to new data. While a variant of these methods utilizing multiple weights could be proposed, it would resemble the approach of linear methods, and thus we will not delve into them here.
\subsection{Generalization of Laplacian Eigenmap}
Given a weight matrix $W \in \mathbb{R}^{n \times n}$, the objective function is to solve
\begin{equation}
\begin{aligned}
\arg \min_{\mathcal{Y} \times_{M+1} D *_1 \mathcal{Y}^\top =\mathcal{I}} \Phi_{LE}(\mathcal{Y}) & :=\dfrac{1}{2} \sum_{i,j} w_{i,j} \left\|\mathcal{Y}^{(i)}-\mathcal{Y}^{(j)}\right\|_{F}^{2}.
\end{aligned}
\end{equation}
The objective function can be written as
\begin{equation*}
\Phi_{LE}(\mathcal{Y})=\operatorname{Tr}\left(\mathcal{Y} \times_{M+1} L *_1 \mathcal{Y}^\top \right).
\end{equation*}
For $\widehat{\mathcal{Y}}=\mathcal{Y} \times_{M+1} D^{1 / 2}$, the constraint becomes 
$\widehat{\mathcal{Y}} *_1 \widehat{\mathcal{Y}}^\top =\mathcal{I}$, using $\mathcal{Y}=\widehat{\mathcal{Y}} \times_{M+1} D^{-1 / 2}$, the objective function becomes
\begin{equation} \label{eq:OLPP_equivalent}
\Phi_{LE}(\widehat{\mathcal{Y}})=\operatorname{Tr}\left(\widehat{\mathcal{Y}} \times_{M+1} L_n *_1 \widehat{\mathcal{Y}}^\top \right).
\end{equation}
Using the isomorphism $\Psi$ and its properties, the problem is equivalent to Equation~\eqref{eq : loss1} under the constraint $\Psi(\widehat{\mathcal{Y}}) *_1 \Psi(\widehat{\mathcal{Y}})^\top=I$. Since the solution is the smallest $d$ eigenvectors of $L_n$, the solution of the original problem would be $\mathcal{Y}=\Psi^{-1}(\Psi(\widehat{\mathcal{Y}})) \times_{M+1} D^{-1 / 2}$.\\
The algorithm bellow, shows the steps of LE via the Einstein product.
\begin{algorithm}[H]
\caption{LE-Einstein}
\label{algorithm_LE}
\hspace*{\algorithmicindent} \textbf{Input:} $\mathcal{X}$ (Data) $d$ (subspace dimension).\\
\hspace*{\algorithmicindent} \textbf{Output:} $\mathcal{Y}$ (Projection data).
\begin{algorithmic}[1]
\State Compute $L_n$. \Comment{Using the appropriate method} 
\State Compute the smallest $d$ vectors of $L_n$.
\State Combine these vectors and reshape them to get $\Psi(\widehat{\mathcal{Y}})$.
\State Compute $\mathcal{Y}=\Psi^{-1}(\Psi(\widehat{\mathcal{Y}})) \times_{M+1} D^{-1 / 2}$.
\end{algorithmic}
\end{algorithm}

\subsubsection{Projection on out of Sample Data}
The out of sample extension is the problem of extending the method to new data. Many approaches were proposed to solve this problem, such as the Nyström method, kernel mapping, eigenfunctions \cite{bengio2003out} \cite{bengio2003out,tai2022kernelized}, etc. We will propose a method that is based on the eigenfunctions.\\
In matrix case, the out of sample extension is done simply computing the components explicitly as $y_{t_j}=\frac{1}{\lambda_j} \mathbf{k}_{t}^\top \mathbf{v}_j, \; j=1,\ldots, d$, where $\mathbf{k}_{t}$ is the kernel matrix of the new data, and $(\lambda_j,\mathbf{v}_j)$ is eigentuple of the kernel matrix $L_n$, $\mathbf{k}_t=(K(\mathbf{x}_t,\mathbf{x}_1),\ldots,K(\mathbf{x}_t,\mathbf{x}_n))^\top$ is the kernel vector of the test data $\mathbf{x}_t$. 
It can be written as 
$\mathbf{y}_{t}= \operatorname{diag}(\lambda_1,\ldots,\lambda_d)^{-1} [\mathbf{v}_1,\ldots,\mathbf{v}_d]^\top \mathbf{k}_{t} $
Thus, the generalization is straightforward.

\subsection{Generalization of LLE}
LLE is a nonlinear method that aims to preserve the local structure of the data. After finding the \( k \)-nearest neighbors, it determines the weights that minimize the reconstruction error. In other words, it seeks to solve the following objective function
\begin{equation}\label{eq:LLE_weight}
Re(W):=\sum_{i}\left\|\mathcal{X}^{(i)}-\sum_{j} w_{i,j} \mathcal{X}^{(j)}\right\|_{F}^{2},
\end{equation}
subject to two constraints: a sparse one (the weights are zero if a point is not in the neighborhood of another point), ensuring the locality of the reconstruction weight, and an invariance constraint (the row sum of the weight matrix is 1). Finally, the projected data is obtained by solving the following objective function
\begin{equation}
\begin{aligned}
\arg \min_{\substack{\mathcal{Y} *_1 \mathcal{Y}^\top =\mathcal{I}\\ \sum_{i} \mathcal{Y}^{(i)} = \mathcal{O}}} \Phi_{LLE} & :=\sum_{i}\left\|\mathcal{Y}^{(i)}-\sum_{j} w_{i,j} \mathcal{Y}^{(j)}\right\|_{F}^{2}=\left\|\mathcal{Y} \times_{M+1} (I_n-W) \right\|_{F}^{2}.
\end{aligned}
\end{equation}

\subsubsection{Computing the weights}
Equation~\eqref{eq:LLE_weight} can be decomposed to finding the weights $w_{i,:}$ of a point $\mathcal{X}^{(i)}$ independently. For simplicity, we will refer to the neighbors of $\mathcal{X}^{(i)}$ as $\mathcal{N}^{(i,j)}$, and the weights of its neighbors as $\mathbf{w}_i \in \mathbb{R}^{k}$, with $k$ the number of neighbors, which plays the rule of the sparseness constraint. Denote the local Grammian matrix $G^{(i)} \in \mathbb{R}^{k \times k}$ as $G^{(i)}_{j,k}= \langle \mathcal{X}^{(i)}-\mathcal{N}^{(i,j)},\mathcal{X}^{(i)}-\mathcal{N}^{(i,k)} \rangle$.\\
The invariance constraint can be written as $\mathbf{1}^\top \mathbf{w}_{i}=1$, thus, we can write the problem as
\begin{equation*}
\begin{aligned}
\sum_i \left\|\mathcal{X}^{(i)}-\sum_{j} w_{i,j} \mathcal{X}^{(j)}\right\|_{F}^{2}&= \sum_i \left\|\mathcal{X}^{(i)}-\sum_{j} w_{j} \mathcal{N}^{(i,j)}\right\|_{F}^{2}\\
&= \sum_i \left\|\sum_{j} w_{i,j}(\mathcal{X}^{(i)} -\mathcal{N}^{(i,j)})\right\|_{F}^{2}\\
&=\sum_{i,j,k} w_{i,j} w_{i,k} G^{(i)}_{j,k}\\
&=\sum_i \mathbf{w}_i^\top G^{(i)} \mathbf{w}_i.
\end{aligned}
\end{equation*}
This constrained problem can be solved using the Lagrangian
\begin{equation*}
\mathcal{L}(\{\mathbf{w}_i\}_i,\lambda)= \sum_i \mathbf{w}_i^\top G^{(i)} \mathbf{w}_i- \sum_i \lambda_i (\mathbf{1}^\top \mathbf{w}_i-1).
\end{equation*}
We compute the partial derivative with respect to $\mathbf{w}_i$ and setting it to zero
\begin{equation}\begin{aligned}\label{eq:LLE_weight_KKT_1}
\dfrac{\partial \mathcal{L}}{\partial \mathbf{w}_i}&=2G^{(i)} \mathbf{w}_i-\lambda_i \mathbf{1}=0\\
\implies \mathbf{w}_i&=\frac{\lambda_i}{2} G^{(i)^{-1}} \mathbf{1}.\\
\end{aligned}
\end{equation}
We utilize the fact that the Grammian matrix is symmetric, assuming that \( G^{(i)} \) is full rank, which is typically the case. However, if \( G^{(i)} \) is not full rank, a small value can be added to its diagonal to ensure full rank.\\
The partial derivative with respect to \( \lambda_i \), set to zero, yields the invariance constraint of point \( i \), i.e., \( \mathbf{1}^\top \mathbf{w}_i=1 \). Multiplying Equation \eqref{eq:LLE_weight_KKT_1} by \( \mathbf{1}^\top \), we can isolate \( \lambda \) and arrive at the following equation
\begin{equation*}
\mathbf{w}_i=\dfrac{G^{(i)^{-1}} \mathbf{1}}{\mathbf{1}^\top G^{(i)^{-1}} \mathbf{1}}.
\end{equation*}

\subsubsection{Computing the projected data}
The final step resembles the previous cases, ensuring that the mean constraint is satisfied. As $I-W$ can be represented as a Laplacian, we know that the number of components corresponds to the multiplicity of the eigenvalue 0. Hence, there is at least one eigenvalue 0 with multiplicity 1, and the identity tensor serves as the corresponding eigenvector, thereby satisfying the second constraint. For further elaboration, interested readers can refer to \cite{ghojogh2020locally}.\\
The solution is equivalent to solving the matricized version, where the solution comprises the smallest $d$ singular eigenvectors of the matrix \( (I_n-W)(I_n-W^\top ) \). Consequently, the solution to the original problem is the inverse transform, denoted as \( \Psi^{-1} \), of the solution to the matricized version.\\
The algorithm bellow, shows the steps of LLE via the Einstein product.
\begin{algorithm}[H]
\caption{LLE-Einstein}
\label{algorithm_LLE}
\hspace*{\algorithmicindent} \textbf{Input:} $\mathcal{X}$ (Data) $d$ (subspace dimension).\\
\hspace*{\algorithmicindent} \textbf{Output:} $\mathcal{Y}$ (Projection data).
\begin{algorithmic}[1]
\State Find the neighbors of each point. 
\State Compute the reconstruction weight $W$.
\State Compute the smallest $d$ eigenvectors of $(I_n-W)(I_n-W^\top )$.
\State Compute the $\Psi^{-1}$ of these vectors with the appropriate reshaping to get $\mathcal{Y}$.
\end{algorithmic}
\end{algorithm}

\subsubsection{Projection on out of Sample Data}
To extend these methods to new data (test data) not seen in the training set, various approaches can be employed. These include kernel mapping, eigenfunctions (as discussed in Bengio et al. \cite{bengio2003out}), and linear reconstruction. Here, we opt to generalize the latter approach. We can follow similar steps to those used in matrix-based methods. Specifically, we can perform the following steps without re-running the algorithm on the entire dataset
\begin{enumerate}
 \item Find the neighbors in training data of each new data test.
 \item Compute the reconstruction weight that best reconstruct each test point from its k neighbours in the training data.
 \item Compute the low dimensional representation of the new data using the reconstruction weight.
\end{enumerate}
More formally, after finding the neighbours $\mathcal{N}^{(i,j)}$ of a test data $\mathcal{X}_t^{(i)}$, we solve the following problem
\begin{equation*}
\arg \min_{w^{(t)}}\sum_i \left\|\mathcal{X}_t^{(i)}-\sum_{j} w^{(t)}_{i,j} \mathcal{N}^{(i,j)}\right\|_{F}^{2},
\end{equation*}
with $\mathbf{w}^{(t)}_{i,:}$ is the reconstruction weight of the test data, under the same constraint, i.e., the row sum of the weight matrix is 1, with value zero if it's not in the neighbour of a point.\\
The solution is $w^{(t)}_{i}=\dfrac{G_t^{(i)^{-1}} \mathbf{1}}{\mathbf{1}^\top G_t^{(i)^{-1}} \mathbf{1}}$, with $G^{(i)}_{t_{j,k}}= \langle \mathcal{X}_t^{(i)}-\mathcal{N}^{(i,j)},\mathcal{X}_t^{(i)}-\mathcal{N}^{(i,k)} \rangle$.\\
Finally, the embedding of the test data $\mathcal{Y}_t^{(i)}$ is obtained as $\sum_j w^{(t)}_{i,j} \mathcal{Y}^{(j)}$, with $\mathcal{Y}^{(j)}=\phi_{LLE}(\mathcal{N}^{(i,j)})$ are the embedding representation of the neighbours of the test data.

\section{Other variants via the Einstein product}\label{sec:Variants}
\subsection{Kernel methods}
Kernels serve as powerful tools enabling linear methods to operate effectively in high-dimensional spaces, allowing for the representation of nonlinear data without explicitly computing data coordinates in this feature space. The kernel trick, a pivotal breakthrough in machine learning, facilitates this process. Formally, instead of directly manipulating the data \(\mathcal{X}=\{\mathcal{X}^{(1)},\ldots,\mathcal{X}^{(n)}\}\), we operate within a high-dimensional implicit feature space using a function \(\Phi\). \\
We denote the tensor \(\Phi(\mathcal{X})=\{\Phi(\mathcal{X}^{(1)}),\ldots,\Phi(\mathcal{X}^{(n)})\}=\{\Phi(\mathcal{X})^{(1)},\ldots,\Phi(\mathcal{X})^{(n)}\}\). The mapping need not be explicitly known; only the Gram matrix \(K\) is required. This matrix represents the inner products of the data in the feature space, defined as \(K_{i,j}=\langle \Phi(\mathcal{X}^{(i)}),\Phi(\mathcal{X}^{(j)})\rangle\). Consequently, any method expressible in terms of data inner products can be reformulated in terms of the Gram matrix. Consequently, the kernel trick can be applied sequentially. Moreover, extending kernel methods to multi-linear operations using the Einstein product is straightforward. Commonly used kernels include the Gaussian, polynomial, Laplacian, and Sigmoid kernels, among others.\\
Denote \(\mathcal{Y}=\mathcal{P}^\top *_M \Phi(\mathcal{X})\), with \(\mathcal{P} \in \mathbb{R}^{I_1 \times \ldots \times I_M \times d}\).\subsubsection{Kernel PCA via the Einstein product}
The kernel multi-linear PCA solves the following problem
\begin{equation}
\begin{aligned}
\arg \max_{\substack{\mathcal{P} \in \mathbb{R}^{I_1 \times \ldots \times I_M \times d} \\ \mathcal{P}^\top *_M \mathcal{P}=\mathcal{I}}} &
\left\| \mathcal{P}^\top *_1 ( \Phi(\mathcal{X})- \mathcal{Q})\right\|_{F}^{2},\\
\end{aligned}
\end{equation}
with $\mathcal{Q}$ is the mean of kernel points, the solution is the largest $d$ left singular tensors of $\Phi(\mathcal{X})- \mathcal{Q}=\widehat{\Phi}(\mathcal{X})$. It needs to calculate the eigen-tensors of $\widehat{\Phi}(\mathcal{X}) *_1 \widehat{\Phi}(\mathcal{X})^\top$, which is not accessible, however, the Grammian $K= \widehat{\Phi}(\mathcal{X})^\top *_M \widehat{\Phi}(\mathcal{X})$ is available, we can transform the problem to
\begin{equation*}
\widehat{K} \widehat{\mathbf{z}}_{i}=\lambda_i \widehat{\mathbf{z}}_{i} \; \text{ with } \widehat{\mathbf{z}}_{i}=(\widehat{\Phi}(\mathcal{X}))^\top *_M \mathcal{Z}^{(i)} \in \mathbb{R}^{n},
\end{equation*}
with $\widehat{K}$ representing the Grammian of the centered data, that can easily be obtained from only $K$ as $\widehat{K}=K-HK- KH+HKH$, since 
\begin{equation*}
\begin{aligned}
\widehat{K}_{i,j}&=\widehat{\Phi}\left(\mathcal{X}\right)^{(i)^\top} *_1 \widehat{\Phi}(\mathcal{X})^{(j)}\\
&=\left(\Phi(\mathcal{X}^{(i)})^\top- \dfrac{1}{n} \sum_k \Phi(\mathcal{X}^{(k)})^\top\right) *_1\left(\Phi(\mathcal{X}^{(j)})- \dfrac{1}{n} \sum_l \Phi(\mathcal{X}^{(l)})^\top\right)\\
&=\Phi(\mathcal{X}^{(i)})^\top *_1 \Phi(\mathcal{X}^{(j)})- \dfrac{1}{n} \sum_k \Phi(\mathcal{X}^{(i)})^\top *_1 \Phi(\mathcal{X}^{(k)})- \dfrac{1}{n} \sum_l \Phi(\mathcal{X}^{(l)})^\top *_1 \Phi(\mathcal{X}^{(j)})\\
&=K_{i,j}- \dfrac{1}{n} \sum_k K_{i,k}1_{k,j}- \dfrac{1}{n} \sum_l 1_{i,l} K_{l,j}+ \dfrac{1}{n^2} \sum_{k,l} K_{k,l}\\
&=(K-\dfrac{\mathbf{1}}{n}K- K\dfrac{\mathbf{1}}{n}+ \dfrac{\mathbf{1}}{n}K\dfrac{\mathbf{1}}{n})_{i,j}.\\
\end{aligned}
\end{equation*}
Then, the solution is the same as the matrix case, i.e., the largest $d$ eigenvectors of $\widehat{K}$, reshaped to the appropriate size.\\
The algorithm bellow, shows the steps of the kernel PCA via the Einstein product.

\begin{algorithm}[H]
\caption{Kernel PCA-Einstein}
\label{algorithm_K_PCA}
\hspace*{\algorithmicindent} \textbf{Input:} $\mathcal{X}$ (Data) $(d_i)_{i \leq M}$(dimension output) $K$ (Grammian).\\
\hspace*{\algorithmicindent} \textbf{Output:} $\mathcal{Y}$ (Projected data).
\begin{algorithmic}[1]
\State Compute $\widehat{K}$. \Comment{The mean of the Grammian}
\State Compute the largest $d$ eigenvectors of $\widehat{K}$.
\State Combine these vectors, and reshape them to get $\mathcal{Y}$.
\end{algorithmic}
\end{algorithm}

\subsubsection{Kernel LPP via the Einstein product}
The kernel multi-linear LPP tackles the following problem
\begin{equation}
\begin{aligned}
\arg \min_{\substack{\mathcal{P} \in \mathbb{R}^{I_1 \times \ldots \times I_M \times d} \\ \mathcal{P}^\top *_M \Phi(\mathcal{X}) \times_{M+1} D *_1 \Phi(\mathcal{X})^\top *_M \mathcal{P} =\mathcal{I}}} &
 \operatorname{Tr} \left(\mathcal{P}^\top *_M \left( \Phi(\mathcal{X}) \times_{M+1} L *_1 \Phi(\mathcal{X})^\top \right) *_M \mathcal{P}\right).\\
\end{aligned}
\end{equation}
The solution involves finding the smallest $d$ eigen-tensors of the generalized eigen-problem
\begin{equation*}
\Phi(\mathcal{X}) \times_{M+1} L *_1 \Phi(\mathcal{X})^\top *_M \mathcal{V}=\lambda \Phi(\mathcal{X}) *_{M+1} D *_1 \Phi(\mathcal{X})^\top *_M \mathcal{V}.
\end{equation*}
$\Phi(\mathcal{X})$ is not available, the problem needs to be reformulated, Utilizing the fact that $K$ is invertible, we reformulate the problem as to find the vectors $\mathbf{z}$, solution of the generalized eigen-problem
\begin{equation*}
L \mathbf{z}=\lambda D \mathbf{z}, \text{ with } \mathbf{z}=\Phi(\mathcal{X})^\top *_M \mathcal{V}^{(i)} \in \mathbb{R}^{n},
\end{equation*}
This formulation reduces to the same minimization problem as in the matrix case.

\subsubsection{Kernel ONPP via the Einstein product}
The kernel multi-linear ONPP addresses the following optimization problem
\begin{equation}
\begin{aligned}
\arg \min_{\substack{\mathcal{P} \in \mathbb{R}^{I_1 \times \ldots \times I_M \times d} \\ \mathcal{P}^\top *_M \mathcal{P} =\mathcal{I}}} &
\operatorname{Tr} \left(\mathcal{P}^\top *_M (\Phi(\mathcal{X}) \times_{M+1} (I-W)(I-W^\top) *_1 \Phi(\mathcal{X})^\top) *_M \mathcal{P}\right).\\
\end{aligned}
\end{equation}
The solution involves finding the smallest $d$ eigen-tensors of problem
\begin{equation*}
\Phi(\mathcal{X}) \times_{M+1} (I-W)(I-W^\top) *_1 \Phi(\mathcal{X})^\top *_M \mathcal{V}=\lambda \mathcal{V}.
\end{equation*}
By employing similar techniques as before, we can derive the equivalent problem that seeks to find the vectors $\mathbf{z}$, solution of of the eigen-problem
\begin{equation*}
K(I-W)(I-W^\top) \mathbf{z} = \lambda \mathbf{z}, \; \text{ with } \Phi(\mathcal{X})^\top *_M \mathcal{V}=\mathbf{z}.
\end{equation*} 
The problem is the same minimization problem, the solution $\mathcal{Y}$ can be obtained from reshaping the transpose of the concatenated vectors $\mathbf{z}$.\\
The algorithm bellow, shows the steps of the kernel ONPP via the Einstein product.

\begin{algorithm}[H]
\caption{Kernel ONPP-Einstein}
\label{algorithm_K_ONPP}
\hspace*{\algorithmicindent} \textbf{Input:} $K$ (Grammian) $d$ (subspace dimension).\\
\hspace*{\algorithmicindent} \textbf{Output:} $\mathcal{Y}$ (Projected data).
\begin{algorithmic}[1]
\State Compute $W$. \Comment{Using the appropriate method} 
\State Compute the smallest $d$ eigenvectors of $K(I-W)(I-W^\top)$.
\State Combine these vectors, and reshape them to get $\mathcal{Y}$.
\end{algorithmic}
\end{algorithm}

\subsubsection{Kernel OLPP via the Einstein product}
The kernel multi-linear OLPP tackles the optimization problem defined as follows:
\begin{equation}
\begin{aligned}
\arg \min_{\substack{\mathcal{P} \in \mathbb{R}^{I_1 \times \ldots \times I_M \times d} \\ \mathcal{P}^\top *_M \mathcal{P} =\mathcal{I}}} &
\operatorname{Tr}\left(\mathcal{P}^\top *_M ( \Phi(\mathcal{X}) \times_{M+1} L *_1 \Phi(\mathcal{X})^\top ) *_M \mathcal{P}\right).\\
\end{aligned}
\end{equation}
The solution of the problem involves the eigen-tensors of 
$ \Phi(\mathcal{X}) \times_{M+1} L *_1 \Phi(\mathcal{X})^\top *_M \mathcal{Z}=\lambda \mathcal{Z}$,
By transforming the problem, we arrive to find the vectors $\mathbf{z}$, solution of of the eigen-problem
\begin{equation*}
K \mathbf{z} = \lambda \mathbf{z}, \; \text{ with } \Phi(\mathcal{X})^\top *_M \mathcal{Z}=\mathbf{z},
\end{equation*}
which mirrors the matrix case. Here, the solution $\mathcal{Y}$ can be obtained from reshaping the transpose of the concatenated vectors $\mathbf{z}$.\\
The algorithm bellow, shows the steps of the kernel OLPP via the Einstein product.

\begin{algorithm}[H]
\caption{Kernel OLPP-Einstein}
\label{algorithm_K_OLPP}
\hspace*{\algorithmicindent} \textbf{Input:} $K$ (Grammian) $d$ (subspace dimension).\\
\hspace*{\algorithmicindent} \textbf{Output:} $\mathcal{Y}$ (Projected data).
\begin{algorithmic}[1]
\State Compute the smallest $d$ eigenvectors of $K$.
\State Combine these vectors, and reshape them to get $\mathcal{Y}$.
\end{algorithmic}
\end{algorithm}

\subsection{Supervised learning}
In general, supervised learning differs from unsupervised learning primarily in how the weight matrix incorporates class label information. Supervised learning tends to outperform unsupervised learning, particularly with small datasets, due to the utilization of additional class label information.\\
In supervised learning, each data point is associated with a known class label. The weight matrix can be adapted to include this class label information. For instance, it may take the form of a block diagonal matrix, where $W_s(i) \in \mathbb{R}^{n_i \times n_i}$ represents sub-weight matrices, and $n_i$ denotes the number of data points in class $i$. Let $c(i)$ denote the class of data point $x_i$.

\textbf{Supervised PCA:} PCA is the sole linear method presented devoid of a graph matrix. Consequently, Supervised PCA implementation is not straightforward, necessitating a detailed explanation. Following the approach proposed in \cite{barshan2011supervised}, we address this challenge by formulating the problem and leveraging the empirical Hilbert-Schmidt independence criterion (HSIC):
\begin{equation*}
\begin{aligned}
\arg \max_{P^\top P=I} &
\operatorname{Tr}(P^\top X H K_L H X^\top P).
\end{aligned}
\end{equation*}
where $K_L$ is the kernel of the outcome measurements $Y$. Thus, the generalization would be to solve
\begin{equation}\label{eq:S_PCA3D}
\arg \max_{\substack{\mathcal{P} \in \mathbb{R}^{I_1 \times \ldots \times I_M \times d}\\ \mathcal{P}^\top *_M \mathcal{P}=\mathcal{I}}} \operatorname{Tr} \left(\mathcal{P}^\top *_M \mathcal{X} \times_{M+1} H K_L H *_{1} \mathcal{X}^\top *_M \mathcal{P} \right),
\end{equation}
The solution of \eqref{eq:S_PCA3D} is the largest $d$ eigen-tensors of $\mathcal{X} \times_{M+1} H K_L H \times_{1} \mathcal{X}^\top$.\\
Notice that when, $K_L=I_n$, we get the same problem as in the unsupervised case.\\
The algorithm bellow, shows the steps of the Supervised PCA via the Einstein product.

\begin{algorithm}[H]
\caption{Supervised PCA-Einstein}
\label{algorithm_PCA_Supervised}
\hspace*{\algorithmicindent} \textbf{Input:} $\mathcal{X}$ (Data) $d$(dimension output). $K_L$ (Kernel of labels)\\
\hspace*{\algorithmicindent} \textbf{Output:} $\mathcal{P}$ (Projection space).
\begin{algorithmic}[1]
\State Compute the largest $d$ eigen-tensors of $\mathcal{X} \times_{M+1} H K_L H \times_{1} \mathcal{X}^\top$.
\State Combine these tensors to get $\mathcal{P}$.
\end{algorithmic}
\end{algorithm}

\textbf{Supervised Laplacian Eigenmap:}
The supervised Laplacian Eigenmap is similar to the Laplacian Eigenmap, with the difference that the weight matrix is computed using the class label, many approaches were proposed \cite{raducanu2012supervised,costa2005classification,tai2022kernelized}. We choose a simple approach that changes the weight matrix to $W_s$, and the rest of the algorithm is the same.

\textbf{Supervised LLE:}
There are multiple variants of LLE that uses the class label to improve the performance of the method, e.g., Supervised LLE (SLLE), probabilistic SLLE \cite{zhao2009supervised}, supervised guided LLE using HSIC \cite{alvarez2011global}, enhanced SLLE \cite{zhang2009enhanced}..., the general strategy is to incorporate the class label either in computing the distance matrix, the weight matrix, or in the objective function \cite{ghojogh2020locally}. We choose the simplest which is the first strategy; By changing the distance matrix by adding term that increases the inter-class and decreases the intra-class variance. The rest of the steps are the same as the unsupervised LLE.

\subsubsection{Repulsion approaches}
In the semi-supervised or the supervised learning, how we use the class label can affect the performance, commonly, the similarity matrix, tells us only if two points are of the same class or not, without incorporating any additional information on data locality, e.g., the closeness of points of different classes.., thus the repulsion technique is used to take into account the class label information, by repulsing the points of different classes, and attracting the points of the same class. It extends the traditional graph-based methods by incorporating repulsion or discrimination elements into the graph Laplacian, learning to more distinct separation of different classes in the reduced-dimensional space by integrating the class label information directly into the graph structure. The concept of repulsion has been used in DR with different formulations \cite{zhang2006discriminant,chen2005local} before using the k-nn graph to derive it. \cite{kokiopoulou2009enhanced} a generic proposed a method that applies attraction to all points of the same class with the use of repulsion between nearby points of different classes, which was found to be significantly better than the previous approaches. Thus, we will use the same approach and generalize it to the Einstein-product.\\
The repulsion graph $\mathcal{G}^{(r)}=\{\mathcal{V}^{(r)},\mathcal{E}^{(r)} \}$ is derived from k-nn Graph $\mathcal{G}=\{\mathcal{V},\mathcal{E}\}$ based on the class label information, the weight of the edges can be computed in the simplest form as
\begin{equation*}
W_{i,j}^{(r)}=\left\{\begin{array}{ll}
1 & \text { if } (\mathbf{x}_{i},\mathbf{x}_{j}) \in \mathcal{E}, \; i \neq j , \; c(i) \neq c(j)\\
0 & \text { otherwise.}
\end{array}\right.
\end{equation*}
Hence, in the case of fully connected graph, the repulsion weight would be of the form
\begin{equation*}
W^{(r)}=\mathbf{1}_n-\operatorname{diag}(\mathbf{1}_{n_i}). 
\end{equation*}
Other weights value can be proposed.\\
The new repulsion algorithms are similar to the previous ones, with the new weight matrix $W_s=W+\beta W^{(r)}$ with $\beta$ is a parameter.

\section{Experiments}\label{sec:Exp}
To show the effectiveness of the proposed methods, we will use datasets that are commonly used in the literature. The experiments will be conducted on the GTDB dataset for the facial recognition, and the MNIST dataset for the digit recognition. We note that these datasets give the raw images instead of features. The results will be compared to the state of art methods, by using the projected data in a classifier. The baseline is also used for comparison, which is utilizing the raw data as the input of the classifier, and the recognition rate will be used as the evaluation metric for all methods. Images were chosen because the proposed methods are designed to work on multi-linear data, and the image is a typical example of such data. The proposed methods that use the multi-weight will be denoted by adding "$-MW$" to the name of the method. It is intuitive to use multi-weight for images since the third mode represent the RGB while the first two modes represent the location of the pixel.\\
The evaluation metric Recognition rate (IR) is used to evaluate the performance of the proposed method. It is defined as the number of correct classification over the total number of testing data. A correct classification is done by computing the minimum distance between the projected data training and the projected testing data. The IR is computed on the testing data.

For simplicity, we used the supervised version of methods, with Gaussian weights, and the recommended parameter in \cite{kokiopoulou2009enhanced} (half the median of data) for the Gaussian parameter. 
\begin{equation*}
IR=100 \times \dfrac{\text{Number of recognized images in a data}}{\text{Number of images in the data}}.
\end{equation*}
All computations are carried out on a laptop computer with 2.1 GHz Intel Core 7 processors 8-th Gen and 8 GB of memory using MATLAB 2021a.
\subsection{Digit recognition}
The Dataset that will be used in the experiments is the MNIST dataset \footnote{\url{https://lucidar.me/en/matlab/load-mnist-database-of-handwritten-digits-in-matlab/}}. It contains 60,000 training images and 10,000 testing images of labeled handwritten digits. The images are of size $28 \times 28$, and are normalized gray images. The evaluation metric is the same as the facial recognition. We will work with smaller subset of the data to speed up the computation. e.g., 1000 training images and 200 testing images taking randomly from the data. Observe that the multi-weight methods are not used in this data since it is gray data, thus, we don't have multiple weights.\\
Table~\ref{Tab:digit} shows the performance of different approaches compared to the state-of-art based on different subspace dimensions.
\begin{table}[H]
\centering
\begin{tabular}{c|ccccccc}
\textbf{-} & \textbf{OLPP} & \textbf{OLPP-E} & \textbf{ONPP} & \textbf{ONPP-E} & \textbf{PCA} & \textbf{PCA-E} & \textbf{Baseline} \\
\hline
5 & 50,50 & 50,50 & 56,00 & 56,00 & 63,00 & 63,00 & 8,50 \\
10 & 75,50 & 75,50 & 81,50 & 81,50 & 82,50 & 82,50 & 8,50 \\
15 & 81,00 & 81,00 & 80,50 & 80,50 & 84,50 & 84,50 & 8,50 \\
20 & 85,00 & 85,00 & 83,50 & 83,50 & 88,00 & 88,00 & 8,50 \\
25 & 86,00 & 86,00 & 87,50 & 87,50 & 88,00 & 88,00 & 8,50 \\
30 & 85,50 & 85,50 & 87,50 & 87,50 & 89,00 & 89,00 & 8,50 \\
35 & 88,00 & 88,00 & 89,50 & 89,50 & 87,00 & 87,00 & 8,50 \\
40 & 88,00 & 88,00 & 89,00 & 89,00 & 87,50 & 87,50 & 8,50 \\
\hline
\end{tabular}
\caption{Performance of methods per different subspace dimension.}
\label{Tab:digit}
\end{table}
The results are similar in the MNIST dataset between the method with its Multi dimensional counterpart, we claim that it is due to the fact that the vectorization of 2 dimension to 1 does not affect much the accuracy, which leads to similar results using the proposed parameters. 

Note that the objective is to compare a method with its proposed multi dimension counterpart via the Einstein product to see if the generalization works.
\subsection{Facial recognition}
The dataset that will be used in the experiments is the Georgia Tech database GTDB crop \footnote{\url{https://www.anefian.com/research/face_reco.htm}}. It contains 750 color JPEG images of 50 person, with each one represented by exactly 15 images that show different facial expression, scale and lighting conditions. Figure \ref{fig:GTDB}
shows an example of 12 arbitrary images from the possible 15 of an arbitrary person in the data set.\\
Our data in this case is a tensor of size $height \times width \times 3 \times 750$ when dealing with RGB, and $height \times width\times 750$ when dealing with gray images. The height and width of the images are fixed to $60 \times 60$. The data is normalized.

\begin{figure}[H]
\centering
\includegraphics[width=0.8\textwidth]{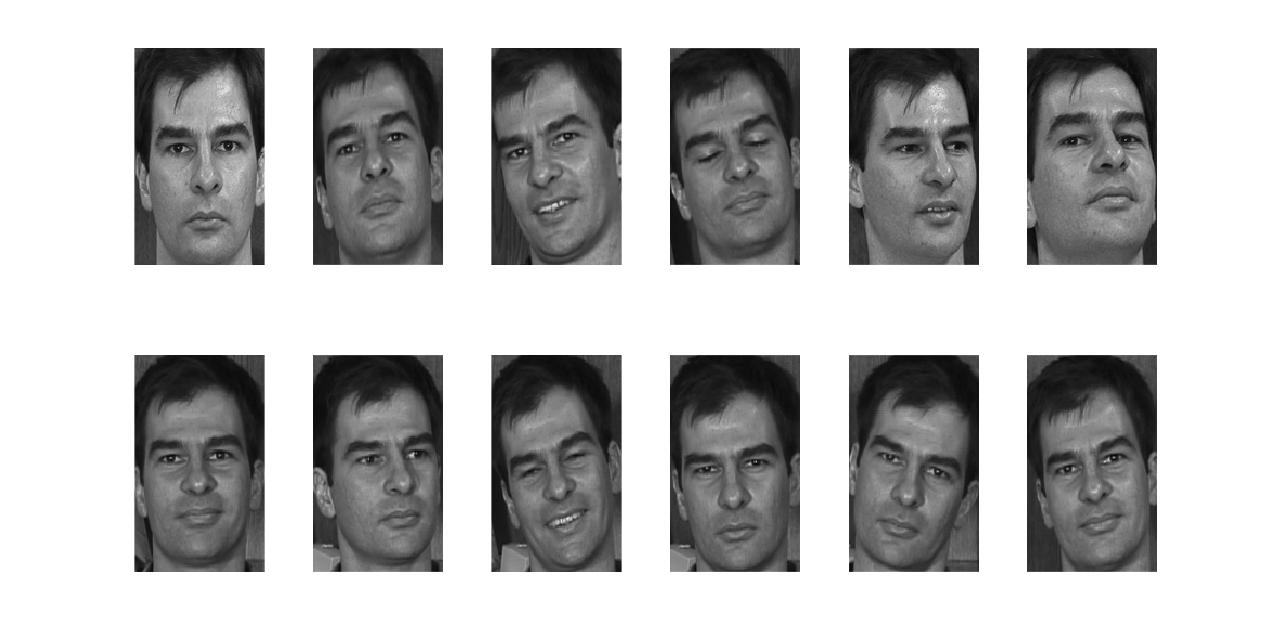}
\caption{Example of images of one person in the GTDB dataset.}
\label{fig:GTDB}
\end{figure}

The experiment is done using 12 images for training and 3 for testing per face. Figure \ref{fig:GTDB_results} shows these results for different subspace dimension reduction
\begin{figure}[H]
\centering
\includegraphics[width=0.8\textwidth]{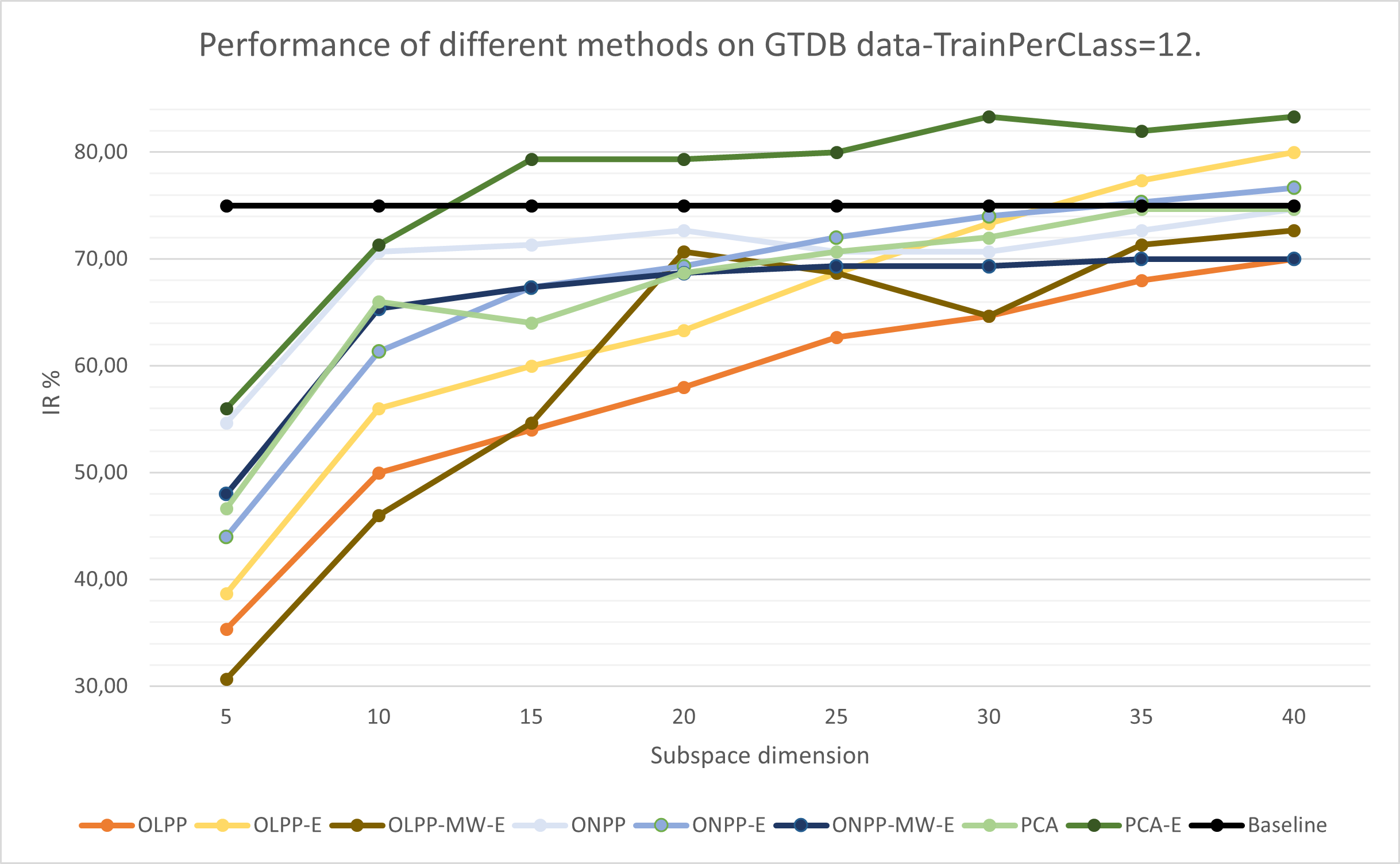}
\caption{Performance of methods on different subspace dimension.}
\label{fig:GTDB_results}
\end{figure}
The results show that as the subspace dimension increases, the performance of most methods also increases, suggesting that these methods benefit from a higher dimensional-feature space up to a point that differ from a method to another. The generalized methods using the Einstein product gives overall better result on all subspace dimension compared to its counterparts, except for the ONPP in the small $d$ case. The Multiple-weight methods show varying performance. They outperform the single-weight in some cases. Future work could be considered to enhance how the to aggregate the results of each weights in order to give a more robust results.\\
The objective is to compare between a method and its multi dimensional counter parts via the Einstein product, e.g., the OLPP method with the OLPP-E, and OLPP-E-MW.\\
The superiority of Einstein based methods can be justified by the fact that, it preserve the multi-linear structure of the data, and the non-linear structure of the data, which is not the case of the vectorization of the data, which is the case of the other matricized methods.

\section{Conclusion}\label{sec:Cnc}
The paper advances the field of dimension reduction by introducing refined graph-based methods and leveraging the Einstein product for tensor data. It extends both the Linear and Nonlinear methods (supervised and unsupervised) to higher order tensors as well as its variants. The methods are conducted on the GTDB and MNIST dataset, and the results are compared to the state-of-art-methods showing the competitive results. A future work could be conducted on generalization on trace ratio methods as Linear Discriminant Analysis. An acceleration of the computation can also be proposed using the Tensor Golub Kahan decomposition to get an approximation of these eigen-tensors in constructing the projected space.

\bibliographystyle{siam}
\bibliography{cas-refs}
\end{document}